\documentclass[11pt,reqno]{amsart}
\usepackage{graphicx}
\usepackage{amsmath,amssymb,mathrsfs}
\usepackage[usenames]{color}


\vfuzz2pt 
\hfuzz2pt 
\newtheorem{thm}{Theorem}[section]

\newtheorem{lem}{Lemma}[section]
\newtheorem{prop}{Proposition}[section]
\theoremstyle{definition}

\theoremstyle{remark}
\newtheorem{rem}{Remark}[section]
\numberwithin{equation}{section}


\begin{document}

\title[Nearly Cloaking Full Maxwell's Equations]{Nearly Cloaking the Full Maxwell Equations}

\author{Gang Bao}
\address{Department of Mathematics, Zhejiang University, Hangzhou 310027,
China; Department of Mathematics, Michigan
State University, East Lansing, MI 48824.}
\email{bao@math.msu.edu}

\author{Hongyu Liu}
\address{Department of
Mathematics and Statistics, University of North Carolina, Charlotte,
NC 28223.}
\email{hongyu.liuip@gmail.com}

\thanks{\emph{2010 Mathematics Subject Classification.} 35Q60, 35J05, 31B10, 35R30, 78A40}

\keywords{Maxwell's equations, invisibility cloaking, transformation optics, asymptotic estimates, layer potential technique}

\date{}

\maketitle

\begin{abstract}
The approximate cloaking is investigated for time-harmonic Maxwell's equations via the approach of transformation optics. The problem is reduced to certain boundary effect estimates due to an inhomogeneous electromagnetic inclusion with an asymptotically small support but an arbitrary content enclosed by a thin high-conducting layer. Sharp estimates are established in terms of the asymptotic parameter, which are independent of the material tensors of the small electromagnetic inclusion.
The result implies that the `blow-up-a-small-region' construction via the transformation optics approach yields a near-cloak for the electromagnetic waves. A novelty lies in the fact that the geometry of the cloaking construction of this work can be very general. Moreover, by incorporating the conducting layer developed in the present paper right between the cloaked region and the cloaking region, arbitrary electromagnetic contents can be nearly cloaked. Our mathematical technique extends the general one developed in \cite{LiuSun} for nearly cloaking scalar optics. In order to investigate the approximate electromagnetic cloaking for general geometries with arbitrary cloaked contents,  new techniques and analysis tools must be developed for this more challenging vector optics case.
\end{abstract}

\section{Introduction and statement of the main result}

This paper is concerned with invisibility cloaking for electromagnetic (EM) waves via the approach of transformation optics \cite{GLU,GLU2,Leo,PenSchSmi}, which is a rapidly growing scientific field with many potential applications. We refer to  \cite{CC,GKLU4,GKLU5,U2,YYQ} and the references therein
for discussions of the recent progress on both the theory and experiments.

Let $D$ and $\Omega$ be two bounded simply connected smooth domains in $\mathbb{R}^3$ such that $D\Subset\Omega$ and $D$ contains the origin. Denote
\[
D_\rho:=\{\rho x; \ x\in D\}\quad \mbox{for\ $\rho\in\mathbb{R}_+$}.
\]
Let $\varepsilon(x)=(\varepsilon^{ij}(x))_{i,j=1}^3$, $\mu(x)=(\mu^{ij}(x))_{i,j=1}^3$ and $\sigma(x)=(\sigma^{ij}(x))_{i,j=1}^3$, $x\in\Omega$ be real symmetric-matrix-valued functions, which are bounded in the sense that
\begin{equation}\label{eq:uniform elliptic}
c|\xi|^2\leq \sum_{i,j=1}^3 \varepsilon^{ij}(x)\xi_i\xi_j\leq C |\xi|^2,\quad c|\xi|^2\leq \sum_{i,j=1}^3 \mu^{ij}(x)\xi_i\xi_j\leq C |\xi|^2
\end{equation}
and
\begin{equation}\label{eq:uniform elliptic 2}
0\leq \sum_{i,j=1}^3 \sigma^{ij}(x)\xi_i\xi_j\leq C |\xi|^2,
\end{equation}
for all $x\in\Omega$ and $\xi=(\xi_i)_{i=1}^3\in\mathbb{R}^3$. Here $c$ and $C$ are two generic positive constants whose meanings should be clear from the contexts. Physically, the functions $\varepsilon$, $\mu$ and $\sigma$ respectively stand for the electric permittivity, magnetic permeability and conductivity tensors of a {\it regular} EM medium occupying $\Omega$.

Let $0<\rho<1$ be a small parameter. Assume that there exists an orientation-preserving bi-Lipschitz mapping $F_\rho: \overline{\Omega}\backslash D_\rho\rightarrow \overline{\Omega}\backslash D$, such that
\begin{equation}\label{eq:map1}
F_\rho(\overline{\Omega}\backslash D_\rho)= \overline{\Omega}\backslash D,\qquad F_\rho|_{\partial\Omega}=\mbox{Identity}.
\end{equation}
Set
\begin{equation}\label{eq:map whole}
F(x)=\begin{cases}
F_\rho(x),\quad & x\in \Omega\backslash\overline{D}_\rho,\\
\frac{x}{\rho},\quad & x\in D_\rho.
\end{cases}
\end{equation}
Define an EM medium inside $\Omega\backslash\overline{D}$ as follows
\begin{equation}\label{eq:cloaking medium}
\varepsilon_c^\rho(x)=F_* \varepsilon_0(x),\quad \mu_c^\rho(x)=F_* \mu_0(x),\quad \sigma_c^\rho(x)=0,\quad x\in\Omega\backslash \overline{D},
\end{equation}
where $\varepsilon_0^{ij}=\delta^{ij}$ and $\mu_0^{ij}=\delta^{ij}$ with $\delta^{ij}$ the Kronecker delta function denote the EM parameter tensors of the homogeneous free space. The {\it push-forward} in \eqref{eq:cloaking medium} is defined by
\begin{equation}\label{eq:pushforward}
F_*m(x):=\frac{DF(y)\cdot m(y)\cdot DF(y)^T}{\left|\mbox{det} (DF)(y) \right|}\bigg|_{y=F^{-1}(x)},\quad x\in\Omega\backslash\overline{D}
\end{equation}
where $m(y), y\in\Omega\backslash\overline{D}_\rho$, denotes an EM parameter in $\Omega\backslash\overline{D}_\rho$, such as $\varepsilon, \mu$ or $\sigma$, and $DF$ represents the Jacobian matrix of the transformation $F$. Also, one may rewrite \eqref{eq:cloaking medium} as
\[
(\Omega\backslash\overline{D}; \varepsilon_c^\rho, \mu_c^\rho)=F_*(\Omega\backslash\overline{D}_\rho; \varepsilon_0, \mu_0)=(F(\Omega\backslash\overline{D}_\rho); F_*\varepsilon_0, F_*\mu_0).
\]
Similarly, set
\begin{equation}\label{eq:conducting layer}
(D\backslash\overline{D}_{1/2}; \varepsilon_l, \mu_l, \sigma_l)=F_*(D_\rho\backslash\overline{D}_{\rho/2}; \alpha_0\varepsilon_0, \beta_0\mu_0, \gamma_0\rho^{-2}\delta ),
\end{equation}
where $\alpha_0,\beta_0$ and $\gamma_0$ are positive constants, and
\begin{equation}\label{eq:target medium}
(D_{1/2}; \tilde{\varepsilon}_a, \tilde{\mu}_a, \tilde{\sigma}_a)=F_*(D_{\rho/2}; \varepsilon_a, \mu_a, \sigma_a),
\end{equation}
with {\it arbitrary} but {\it regular} $\varepsilon_a, \mu_a$ and $\sigma_a$ .
Hence, we have an EM medium in $\Omega$ given by
\begin{equation}\label{eq:EM medium whole}
\Omega; \tilde{\varepsilon}, \tilde{\mu}, \tilde{\sigma}=\begin{cases}
\varepsilon_c^\rho, \mu_c^\rho, \sigma_c^\rho\qquad & \mbox{in\ \ $\Omega\backslash \overline{D}$},\\
\varepsilon_l, \mu_l, \sigma_l\quad & \mbox{in\ \  $D\backslash\overline{D}_{1/2}$},\\
\tilde{\varepsilon}_a, \tilde{\mu}_a, \tilde{\sigma}_a\quad & \mbox{in\ \ $D_{1/2}$}.
\end{cases}
\end{equation}
The time-harmonic EM waves propagating in $\Omega$ is governed by the following Maxwell equations
\begin{equation}\label{eq:Maxwell}
\begin{cases}
& \displaystyle{\nabla\wedge \widetilde{E}_\rho-i\omega \tilde{\mu} \widetilde{H}_\rho=0}\\
&\displaystyle{ \nabla \wedge \widetilde{H}_\rho+i\omega (\tilde{\varepsilon}+i\frac{\tilde{\sigma}}{\omega}) \widetilde{E}_\rho=0}
\end{cases}\quad \mbox{in\ \ $\Omega$},
\end{equation}
where $\widetilde{E}_\rho\in\mathbb{C}^3$ and $\widetilde{H}_\rho\in \mathbb{C}^3$ denote, respectively, the electric and magnetic fields, and $\omega\in\mathbb{R}_+$ denotes the frequency.

Introduce the boundary operator $\widetilde{\Lambda}_\rho$ which maps the tangential component of $\widetilde{E}_\rho|_{\partial\Omega}$ to that of $\widetilde{H}_\rho|_{\partial\Omega}$, i.e.,
\begin{equation}\label{eq:admittance}
\widetilde{\Lambda}_\rho(\nu\wedge\widetilde{E}_\rho|_{\partial\Omega})=\nu\wedge \widetilde{H}_\rho |_{\partial\Omega}: TH_{\text{Div}}^{-1/2}(\partial\Omega)\rightarrow TH_{\text{Div}}^{-1/2}(\partial\Omega),
\end{equation}
where $\nu$ denotes the outward unit normal vector to $\partial\Omega$, and
\[
TH_{\text{Div}}^{-1/2}(\partial\Omega)=\left\{U\in TH^{-1/2}(\partial\Omega)|\ \text{Div}(U)\in H^{-1/2}(\partial\Omega)\right\}\;,
\]
with $\text{Div}$ the surface divergence operator on $\partial\Omega$, $TH^s(\partial\Omega)$ the subspace of all those $V\in (H^s(\partial\Omega))^3$ which are orthogonal to $\nu$ and $H^s(\cdot)$ the usual $L^2$-based Sobolev space of order $s\in\mathbb{R}$. Note that if $\Gamma$ is the smooth boundary of a bounded domain in $\mathbb{R}^3$, then $H^s(\Gamma)$ and hence $TH^s(\Gamma)$ is well defined for $|s|\leq 2$; see \cite{Gri} and \cite{Lio}.
In \eqref{eq:admittance},  $\widetilde{E}_\rho\in H(\nabla\wedge; \Omega)$ is the unique solution to the Maxwell equations \eqref{eq:Maxwell} associated with the following boundary condition
\begin{equation}\label{eq:bc physical}
\nu\wedge \widetilde{E}_\rho|_{\partial\Omega}=\psi\in TH_{\text{Div}}^{-1/2}(\partial\Omega)\;,
\end{equation}
where
\[H(\nabla\wedge; \Omega)=\{ U\in (L^2(\Omega))^3| \nabla\wedge U\in (L^2(\Omega))^3 \}.
\]
In fact, $\widetilde{\Lambda}_\rho$ is also known as the {\it admittance map} in the literature.

We further introduce the `free-space' admittance map as follows. Let $E_0\in H(\nabla\wedge; \Omega)$ and $H_0\in H(\nabla\wedge; \Omega)$ be solutions to
\begin{equation}\label{eq:Maxwell free}
\begin{cases}
& \nabla\wedge E_0-i\omega \mu_0 H_0=0\qquad \, \mbox{in\ \ $\Omega$},\\
& \nabla\wedge H_0+i\omega \varepsilon_0 E_0=0\qquad\ \mbox{in\ \ $\Omega$},\\
& \nu\wedge E_0|_{\partial\Omega}=\psi \in TH_{\text{Div}}^{-1/2}(\partial\Omega),
\end{cases}
\end{equation}
It is assumed that $\omega$ is not an EM eigenvalue to the Maxwell equations \eqref{eq:Maxwell free}; namely, if $\psi=0$, then one must have $E_0=H_0=0$ for \eqref{eq:Maxwell free}. Hence we have a well-defined admittance map
\begin{equation}\label{eq:free admittance}
\Lambda_0(\psi)=\nu\wedge H_0|_{\partial\Omega}: TH_{\text{Div}}^{-1/2}(\partial\Omega)\rightarrow TH_{\text{Div}}^{-1/2}(\partial\Omega),
\end{equation}
where $H_0\in H(\nabla\wedge; \Omega)$ is the unique solution to \eqref{eq:Maxwell free}.
We refer to \cite{Ned} and \cite{Lei} for studies on the well-posedness of the Maxwell equations in the function setting introduced above.

We are ready to state the main result of this paper.

\begin{thm}\label{thm:main}
Suppose $\omega$ is not an EM eigenvalue of the free-space Maxwell equations \eqref{eq:Maxwell free}. Let $\widetilde{\Lambda}_\rho$ be the boundary admittance map in \eqref{eq:admittance} associated with \eqref{eq:Maxwell}, where the EM parameter tensors are given by \eqref{eq:cloaking medium}--\eqref{eq:EM medium whole}. Let $\Lambda_0$ be the ``free" admittance map in \eqref{eq:free admittance} associated with \eqref{eq:Maxwell free}. Then there exists a positive constant $\rho_0$ such that for any $\rho<\rho_0$,
\begin{equation}\label{eq:main estimate}
\|\widetilde{\Lambda}_\rho-\Lambda_0\|_{\mathcal{L}(TH_{\text{\em Div}}^{-1/2}(\partial\Omega),  TH_{\text{\em Div}}^{-1/2}(\partial\Omega) )}\leq C \rho^3,
\end{equation}
where $C$ is a positive constant dependent only on $\rho_0, \omega, \alpha_0,\beta_0, \gamma_0$ and $D$, $\Omega$, but completely independent of $\rho$, $\tilde{\varepsilon}_a$, $\tilde{\mu}_a$ and $\tilde{\sigma}_a$.
\end{thm}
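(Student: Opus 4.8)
## Proof Strategy

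The plan is to reduce the cloaking estimate to a boundary-perturbation estimate in the ``virtual'' (untransformed) configuration, exploiting the transformation-invariance of Maxwell's equations. First I would invoke the standard fact that the push-forward construction \eqref{eq:cloaking medium}--\eqref{eq:target medium} makes the admittance map $\widetilde{\Lambda}_\rho$ coincide with the admittance map of the ``virtual'' problem: Maxwell's equations on $\Omega$ with constitutive parameters equal to the free-space ones on $\Omega\backslash\overline{D}_\rho$, equal to $(\alpha_0\varepsilon_0,\beta_0\mu_0,\gamma_0\rho^{-2}\delta)$ on the thin shell $D_\rho\backslash\overline{D}_{\rho/2}$, and equal to $(\varepsilon_a,\mu_a,\sigma_a)$ on the small core $D_{\rho/2}$. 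Indeed, since $F_\rho|_{\partial\Omega}=\mathrm{Identity}$ and the change of variables preserves the tangential traces of $E$ and $H$, one has $\widetilde{\Lambda}_\rho = \Lambda_\rho^{\mathrm{virt}}$ exactly. Thus \eqref{eq:main estimate} becomes a statement purely about a small inhomogeneous EM inclusion of size $\rho$ enclosed by a high-conducting thin layer — this is precisely the ``boundary effect estimate'' advertised in the abstract, and it is independent of $F_\rho$, which is why the geometry can be general.

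Next I would set up the difference field. Let $(E_\rho,H_\rho)$ solve the virtual Maxwell system with boundary data $\psi$, and $(E_0,H_0)$ the free-space system \eqref{eq:Maxwell free} with the same $\psi$. Writing $W = E_\rho - E_0$, $V = H_\rho - H_0$, one gets a Maxwell system for $(W,V)$ on $\Omega$ with zero tangential trace on $\partial\Omega$ and with source terms supported in $\overline{D}_\rho$, namely proportional to the contrasts $(\varepsilon_a - \varepsilon_0)$, $(\mu_a-\mu_0)$, $i\sigma_a/\omega$ on the core and $(\alpha_0-1)$, $(\beta_0-1)$, $i\gamma_0\rho^{-2}/\omega$ on the shell, all multiplied by the \emph{total} field inside. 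The quantity to estimate, $\|\widetilde\Lambda_\rho - \Lambda_0\|$, is controlled by $\|\nu\wedge V\|_{TH^{-1/2}_{\mathrm{Div}}(\partial\Omega)}$, which by the well-posedness and elliptic regularity for Maxwell away from the inclusion (interior estimates near $\partial\Omega$, using that the medium is smooth there) is bounded by the field energy $\|W\|_{L^2(\Omega)} + \|V\|_{L^2(\Omega)}$, or more efficiently by a suitable weighted norm of the source, i.e. by the $L^2$-mass of the total interior field $(E_\rho,H_\rho)$ on $D_\rho$ weighted against the contrasts. Rescaling $x = \rho y$ transplants the core-and-shell problem onto the fixed domain $D\backslash\overline{D}_{1/2}$ and $D_{1/2}$; under this scaling the conductivity $\gamma_0\rho^{-2}$ becomes $\gamma_0$, i.e. an $O(1)$ conducting layer on the fixed annulus, while the ambient wavenumber $\omega$ becomes $\rho\omega \to 0$. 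So the rescaled interior problem is, to leading order, a \emph{static} (curl-curl with zero frequency) transmission problem on a fixed geometry with a fixed conducting shield — and here is where the layer-potential machinery of \cite{LiuSun}, adapted to the vector case, enters: one represents the interior field via vectorial single/double layer potentials on $\partial D$ and $\partial D_{1/2}$, and the key point is that the large conductivity $\gamma_0$ on the shell makes the interior-most field \emph{decoupled} from and uniformly bounded by the exterior data, \emph{uniformly in} $\varepsilon_a,\mu_a,\sigma_a$. This is the mechanism that removes the dependence on the cloaked contents.

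Assembling the scaling: the total field on $D_\rho$ in the original variables has $L^2(D_\rho)$-norm of order $\rho^{3/2}$ (volume factor $\rho^3$ under the square root) times a bounded incident amplitude; the contrasts on the core are $O(1)$ and contribute a further $\rho^{3/2}$ from integrating the source against a bounded test field; pairing against the Green's operator for the exterior Maxwell problem gives the claimed $\rho^3$. One must separately check that the conducting shell — whose source coefficient $\gamma_0\rho^{-2}$ is \emph{large} — does not spoil this: in the rescaled annulus the coefficient is $O(1)$, and the near-vanishing of the tangential field on the interior side of the shield (the shielding effect) ensures the shell's contribution to the boundary perturbation is also $O(\rho^3)$; this is the technical heart and must be done with the layer-potential expansion in the small parameter $\rho\omega$, tracking orders carefully. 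I expect the \textbf{main obstacle} to be exactly this vectorial layer-potential analysis of the conducting shell: in the scalar case of \cite{LiuSun} one deals with Laplace/Helmholtz single-layer operators whose low-frequency asymptotics are classical, whereas here one needs the analogous mapping properties and invertibility (modulo the kernel of harmonic vector fields, controlled by the simple-connectedness and smoothness hypotheses on $D$ and $\Omega$) for the electric-field integral operators associated with $\nabla\wedge\nabla\wedge$, uniformly as $\rho\omega\to 0$, together with the uniform-in-$\gamma_0^{-1}$ bounds that encode the shielding. The remaining steps — transformation invariance, the difference-field reduction, the exterior elliptic estimate near $\partial\Omega$, and the final power counting — are comparatively routine.
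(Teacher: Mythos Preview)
Your reduction via transformation invariance to the virtual problem (free space outside $D_\rho$, high-conducting shell on $D_\rho\backslash\overline{D}_{\rho/2}$, arbitrary core on $D_{\rho/2}$) is exactly the paper's first step. After that, however, your route diverges and has a genuine gap in the treatment of the conducting shell.

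The paper does \emph{not} try to bound the interior field directly via a rescaled transmission or layer-potential analysis on $\partial D$ and $\partial D_{1/2}$. Instead it runs a bootstrap built on three ingredients. First, an energy identity: taking the real part of $\int_\Omega(\nabla\wedge H_\rho)\cdot\overline{E}_\rho$ and subtracting the analogous free-space identity gives
\[
\rho^{-2}\int_{D_\rho\backslash D_{\rho/2}}|E_\rho|^2 \;\le\; C\,\|\psi\|_{TH^{-1/2}_{\mathrm{Div}}(\partial\Omega)}\,\|\nu\wedge(H_\rho-H_0)\|_{TH^{-1/2}_{\mathrm{Div}}(\partial\Omega)},
\]
so the shell field is controlled by the \emph{unknown} boundary perturbation; the core term $\int_{D_{\rho/2}}\sigma_a E_\rho\cdot\overline{E}_\rho\ge 0$ is simply dropped, and this is the \emph{only} place the arbitrary $(\varepsilon_a,\mu_a,\sigma_a)$ enter. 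Second, a duality argument converts this into a bound on $\|(\nu\wedge E_\rho^+)(\rho\,\cdot)\|_{TH^{-1/2}(\partial D)}$, still in terms of the unknown. Third, a purely \emph{exterior} small-obstacle estimate on $\Omega\backslash\overline{D}_\rho$ (this is where layer potentials appear, but only on $\partial D_\rho$ and $\partial\Omega$, never on $\partial D_{\rho/2}$) gives $\|\nu\wedge(H_\rho-H_0)\|\le C(\rho^3\|\psi\|+\rho^2\|(\nu\wedge E_\rho^+)(\rho\,\cdot)\|)$. Substituting the second estimate into the third and closing with Young's inequality yields $\rho^3$.

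Your source-term framework runs into the following difficulty that this mechanism is designed to avoid. The shell source carries the coefficient $\sigma_\rho/\omega=\gamma_0\rho^{-2}/\omega$ multiplied by the total field $E_\rho$; to get an $O(\rho^3)$ boundary effect you therefore need $\|E_\rho\|_{L^2(D_\rho\backslash D_{\rho/2})}$ to be of order $\rho^{5/2}$, i.e.\ two full powers smaller than the $\rho^{3/2}$ coming from the volume scaling alone. Your claim that ``in the rescaled annulus the coefficient is $O(1)$'' does not deliver this: under $x=\rho y$ the second Maxwell equation in the shell becomes $\nabla_y\wedge H=-i\omega\rho(1+i\rho^{-2}/\omega)E$, whose leading coefficient is $\rho^{-1}$; equivalently, the rescaled Helmholtz wavenumber squared is $\omega^2\rho^2+i\omega=O(1)$, but an $O(1)$ absorbing wavenumber on a unit-thickness annulus gives at best a uniform bound on the rescaled field, not the extra $\rho^2$ smallness you need. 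That smallness is exactly what the $\rho^{-2}$ weight in the energy identity above provides, and your proposal does not invoke it. Finally, carrying out a layer-potential analysis across $\partial D_{1/2}$ uniformly in arbitrary, merely bounded, matrix-valued $(\varepsilon_a,\mu_a,\sigma_a)$ is not a routine extension of \cite{LiuSun}; the paper sidesteps this entirely by never looking inside $D_{\rho/2}$.
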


Before we proceed to prove the result, some general remarks about the significance of the result are
in order.

Theorem~\ref{thm:main} states that the transformation medium $(\Omega\backslash\overline{D}; \varepsilon_c^\rho, \mu_c^\rho, \sigma_c^\rho)$ together with the conducting layer $(D\backslash\overline{D}_{1/2}; \varepsilon_l,\mu_l, \sigma_l)$ in \eqref{eq:EM medium whole} produces an approximate invisibility cloaking device which nearly cloaks an arbitrary target medium $(D_{1/2}; \tilde{\varepsilon}_a, \tilde{\mu}_a, \tilde{\sigma}_a)$ located in the innermost region. Indeed, in the limiting case with $\rho=0$ within spherical geometry, namely $\Omega$ and $D$ are both Euclidean balls, \eqref{eq:EM medium whole} without the conducting layer yields the perfect invisibility cloaking construction in \cite{PenSchSmi,GKLU3}. That is, $\Lambda_\rho=\Lambda_0$ for $\rho=0$, and hence by using the exterior boundary measurements encoded in $\Lambda_\rho$, one cannot ``see" the inside object. However, it is widely known that the construction employs singular materials; that is, the material tensors $\varepsilon_c^\rho$ and $\mu_c^\rho$ in the limiting case $\rho=0$ possess degenerate singularities (cf. \cite{GKLU3}). This presents a great challenge for both theoretical analysis and practical fabrications. In order to avoid the singular structure/materials, several regularized constructions have been developed. In \cite{GKLUoe,GKLU_2,RYNQ}, a truncation of singularities has been introduced. In \cite{KOVW,KSVW,Liu}, the `blow-up-a-point' transformation in \cite{GLU2,Leo,PenSchSmi} has been regularized to become the `blow-up-a-small-region' transformation. Nevertheless, as pointed out in \cite{KocLiuSunUhl}, the truncation-of-singularity construction and the blow-up-a-small-region construction are equivalent to each other. Hence, our present study focuses on the blow-up-a-small-region construction; that is, $F_\rho$ is used to blow up $D_\rho$ of the relative size $\rho<\hspace*{-1mm}<1$ for constructing the cloaking medium in \eqref{eq:cloaking medium}. Theorem~\ref{thm:main} states that our cloaking construction \eqref{eq:EM medium whole} yields an approximate cloaking device within $\rho^3$-accuracy of the perfect cloak. Furthermore, an arbitrary content can be nearly cloaked.

Due to its practical importance, the approximate cloaking has recently been extensively studied. In \cite{KSVW,Ammari1}, approximate cloaking schemes were developed for EIT (electric impedance tomography) which might be regarded as optics at zero frequency. In \cite{Ammari2,Ammari3,KOVW,LiLiuSun,LiuSun,Liu}, various near-cloaking schemes were presented for scalar waves governed by the Helmholtz equation. In \cite{LiuZhou}, a similar construction to \eqref{eq:EM medium whole} was developed for the full Maxwell equations. However, the study in \cite{LiuZhou} was only conducted for spherical geometry and the uniform cloaked content; that is, both $\Omega$ and $D$ in \eqref{eq:EM medium whole} were assumed to be Euclidean balls and the medium parameters $\tilde{\varepsilon}_a, \tilde{\mu}_a$ and $\tilde{\sigma}_a$ were all constants multiple of the identity matrix.  Under these assumptions, the Fourier-Bessel technique can be used to derive the analytic series expansions of the EM fields \cite{LiuZhou}.
To our best knowledge, Theorem~\ref{thm:main} is the first result for nearly cloaking the full Maxwell equations with general geometry and arbitrary cloaked contents. In order to assess the near-cloaking construction, the study is shown to be reduced to the boundary effect estimate due to an inhomogeneous electromagnetic inclusion with an asymptotically small support but an arbitrary content enclosed by a thin conducting layer with an asymptotically high conductivity tensor. The new structures of our problem require novel mathematical arguments. Our mathematical analysis uses the general strategy developed in \cite{LiuSun} for nearly cloaking the scalar Helmholtz equation. However, new technique and estimates must be developed to deal with the general vector Maxwell equations.  Finally, we point out that incorporating a damping mechanism by a conducting layer into the near-cloaking construction \eqref{eq:EM medium whole} is necessary for achieving successful near-cloak. In fact, it has been shown in \cite{LiuZhou} that no matter how small the regularization parameter $\rho$ is, there always exist cloak-busting inclusions. Moreover, the result in \cite{LiuZhou} for the special case within spherical geometry and uniform cloaked contents confirms that our estimate in Theorem~\ref{thm:main} is sharp.

For noninvasive EM detections, an related inverse problem is to extract physical information of the interior object, namely, $\tilde{\varepsilon}, \tilde{\mu}$ and $\tilde{\sigma}$ from the knowledge of the exterior EM measurements encoded into the boundary operator $\widetilde{\Lambda}_\rho$.
We refer the reader to \cite{18} and \cite{19} and the references therein for results on uniqueness and stability of this important inverse problem.

The rest of the paper is organized as follows. In Section 2, we present the proof of Theorem~\ref{thm:main}. Section 3 is devoted to the proof of a key lemma that was needed in the proof of Theorem~\ref{thm:main}.

\section{Proof of the main theorem}



%

\subsection{Proof of Theorem~\ref{thm:main}}

We first present a lemma with some key ingredients of the transformation optics, the proofs of which are available in \cite{LiuZhou}.

\begin{lem}\label{thm:trans opt}
Suppose that $E\in H(\nabla\wedge;\Omega)$ and $H\in H(\nabla\wedge; \Omega)$ are EM fileds satisfying
\[
\begin{split}
\nabla\wedge E-i\omega \mu H=0\qquad & \mbox{in\ \ $\Omega$,}\\
\nabla\wedge H+i\omega \left( \varepsilon+i\frac{\sigma}{\omega} \right) E=0\qquad & \mbox{in\ \ $\Omega$},
\end{split}
\]
where $(\Omega; \varepsilon, \mu, \sigma)$ is a regular EM medium. Let $x'=\mathcal{F}(x): \Omega\rightarrow\Omega$ be a bi-Lipschitz and orientation-preserving mapping such that $\mathcal{F}|_{\partial\Omega}=\mbox{Identity}$. Define the {\emph pull-back EM fields} by
\[
\begin{split}
E'&=(\mathcal{F}^{-1})^* E:=(D\mathcal{F})^{-T}E\circ \mathcal{F}^{-1},\\
H'&=(\mathcal{F}^{-1})^* H:=(D\mathcal{F})^{-T}H\circ \mathcal{F}^{-1}.
\end{split}
\]
Then, $E'\in H(\nabla'\wedge;\Omega)$ and $H' \in H(\nabla'\wedge; \Omega)$. In addition
 the following identities hold
\[
\begin{split}
\nabla' \wedge E'=i\omega \mu' H'\qquad & \mbox{in\ \ $\Omega$}\\
\nabla' \wedge H'=-i\omega \left(\varepsilon'+i\frac{\sigma'}{\omega}\right) E'\quad & \mbox{in\ \ $\Omega$},
\end{split}
\]
where $\nabla'\wedge$ denote the {\em curl} operator in the $x'$-coordinates, and $\varepsilon'$, $\mu'$ and $\sigma'$ are the push-forwards of $\varepsilon, \mu$ and $\sigma$ via $\mathcal{F}$, namely,
\[
(\Omega; \varepsilon',\mu',\sigma')=\mathcal{F}_*(\Omega; \varepsilon, \mu, \sigma).
\]
Particularly, if one lets $\Lambda$ and $\Lambda'$ denote the admittance maps associated with $(E, H)$ and $(E', H')$, respectively. Then
\[
\Lambda=\Lambda'.
\]
\end{lem}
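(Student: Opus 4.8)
The plan is to obtain the lemma from the classical transformation calculus for the curl operator; the whole substance is a single pointwise identity for the curl under pull-back, after which the transformed Maxwell system and the equality of the admittance maps follow almost formally, the only genuine work being a density argument to accommodate the low regularity of $\mathcal{F}$ and of the fields. First I would record, for a $C^1$ orientation-preserving diffeomorphism $\mathcal{F}$ and a smooth vector field $E$, the identity
\begin{equation*}
\big(\nabla'\wedge E'\big)(x')=\left.\frac{D\mathcal{F}(x)}{\det D\mathcal{F}(x)}\,\big(\nabla\wedge E\big)(x)\right|_{x=\mathcal{F}^{-1}(x')},\qquad E'=(D\mathcal{F})^{-T}\,(E\circ\mathcal{F}^{-1}).
\end{equation*}
This is the statement that $E'$ is the pull-back of the $1$-form $E$ by $\mathcal{F}^{-1}$, that the exterior derivative commutes with pull-back, and that a $2$-form in $\mathbb{R}^3$ identified with a vector field transforms under pull-back by $v\mapsto(D\mathcal{F}/\det D\mathcal{F})\,v\circ\mathcal{F}^{-1}$; it may equally be verified by a direct chain-rule computation, orientation preservation being what lets one drop the absolute value around $\det D\mathcal{F}$.

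Granting this identity, the transformed equations are immediate. From $\nabla\wedge E=i\omega\mu H$,
\begin{align*}
\nabla'\wedge E'&=\frac{D\mathcal{F}}{\det D\mathcal{F}}\,(i\omega\mu H)\circ\mathcal{F}^{-1}\\
&=i\omega\left(\frac{D\mathcal{F}\,\mu\,(D\mathcal{F})^{T}}{\det D\mathcal{F}}\circ\mathcal{F}^{-1}\right)(D\mathcal{F})^{-T}(H\circ\mathcal{F}^{-1})=i\omega\,\mu'H',
\end{align*}
where I have inserted $(D\mathcal{F})^{T}(D\mathcal{F})^{-T}=I$ and recognised $\mu'=\mathcal{F}_*\mu$ from \eqref{eq:pushforward}. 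Applying the same manipulation to $\nabla\wedge H=-i\omega(\varepsilon+i\sigma/\omega)E$ and using linearity of the push-forward in the matrix argument gives $\nabla'\wedge H'=-i\omega(\varepsilon'+i\sigma'/\omega)E'$ with $\varepsilon'=\mathcal{F}_*\varepsilon$, $\sigma'=\mathcal{F}_*\sigma$, which is exactly $\mathcal{F}_*(\Omega;\varepsilon,\mu,\sigma)=(\Omega;\varepsilon',\mu',\sigma')$.

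The hard part is the low regularity: $\mathcal{F}$ is only bi-Lipschitz, so $D\mathcal{F}$ is merely an $L^\infty$ matrix field defined a.e., and $E,H$ lie only in $H(\nabla\wedge;\Omega)$, so the pointwise computations above are not literally legitimate. I would make them rigorous in the weak formulation: testing the claimed curl identity against an arbitrary $\phi\in C^\infty_c(\Omega)^3$, carrying out the bi-Lipschitz change of variables $x'=\mathcal{F}(x)$ by the area formula, and invoking the integration-by-parts characterisation of the distributional curl in the $x$-variables reduces the identity to its already-established counterpart there. Since $\mathcal{F}$ is bi-Lipschitz, $\det D\mathcal{F}$ is bounded above and below and composition with $\mathcal{F}^{\pm1}$ preserves $L^2$; hence $E'\in(L^2(\Omega))^3$ and $\nabla'\wedge E'=(D\mathcal{F}/\det D\mathcal{F})(\nabla\wedge E)\circ\mathcal{F}^{-1}\in(L^2(\Omega))^3$, that is $E'\in H(\nabla'\wedge;\Omega)$, and symmetrically $H'\in H(\nabla'\wedge;\Omega)$. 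When $\mathcal{F}$ is piecewise smooth, as it is in the cloaking construction, one may alternatively argue on each smooth piece and match tangential traces across the interfaces.

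Finally, since $\mathcal{F}|_{\partial\Omega}=\mbox{Identity}$, the restriction of $\mathcal{F}^{-1}$ to $\partial\Omega$ is the identity map of the surface, so the pull-back of the $1$-form $E'$ to $\partial\Omega$ coincides with that of $E$; equivalently $\nu\wedge E'|_{\partial\Omega}=\nu\wedge E|_{\partial\Omega}$, and likewise $\nu\wedge H'|_{\partial\Omega}=\nu\wedge H|_{\partial\Omega}$. Combining this with the transformed system and the definition \eqref{eq:admittance} of the admittance map,
\begin{equation*}
\Lambda'\big(\nu\wedge E'|_{\partial\Omega}\big)=\nu\wedge H'|_{\partial\Omega}=\nu\wedge H|_{\partial\Omega}=\Lambda\big(\nu\wedge E|_{\partial\Omega}\big)=\Lambda\big(\nu\wedge E'|_{\partial\Omega}\big);
\end{equation*}
as $\nu\wedge E|_{\partial\Omega}$ exhausts $TH^{-1/2}_{\text{Div}}(\partial\Omega)$ when $(E,H)$ ranges over solutions, this yields $\Lambda=\Lambda'$, completing the argument.
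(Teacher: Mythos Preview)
Your argument is correct and is precisely the standard transformation-optics computation: the curl identity is the statement that exterior differentiation commutes with pull-back, the push-forward of the material tensors appears by inserting $(D\mathcal{F})^T(D\mathcal{F})^{-T}$, and the equality of boundary traces follows because the tangential $1$-form trace $\iota^*E'$ equals $(\mathcal{F}^{-1}\circ\iota)^*E=\iota^*E$ when $\mathcal{F}|_{\partial\Omega}=\mathrm{Id}$.

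There is nothing to compare against here: the paper does not supply a proof of this lemma but simply cites \cite{LiuZhou}. Your write-up is exactly the kind of self-contained justification one would expect to find there, and the care you take with the bi-Lipschitz regularity (passing to the weak formulation via the area formula) is the right way to make the pointwise chain-rule computation honest. One small remark: in the final step you implicitly use that every $\psi\in TH^{-1/2}_{\mathrm{Div}}(\partial\Omega)$ arises as $\nu\wedge E|_{\partial\Omega}$ for some solution pair; this is guaranteed by the well-posedness assumption built into the definition of the admittance map, so it is fine, but it is worth making explicit.
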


Next, for the EM fields $(\widetilde{E}_\rho, \widetilde{H}_\rho)$ in \eqref{eq:Maxwell} associated with the boundary condition \eqref{eq:bc physical}, we let
\begin{equation}\label{eq:EM virtual}
E_\rho=F^* \widetilde{E}_\rho\quad \mbox{and}\quad H_\rho=F^*\widetilde{H}_\rho.
\end{equation}
Then by Lemma~\ref{thm:trans opt}, it is seen that $E_\rho\in H(\nabla\wedge; \Omega)$ and $H_\rho\in H(\nabla\wedge; \Omega)$ which satisfy the following Maxwell equations,
\begin{equation}\label{eq:Maxwell virtual}
\begin{cases}
\displaystyle{\nabla\wedge E_\rho-i\omega \mu_\rho H_\rho=0}\qquad & \mbox{in\ \ $\Omega$},\\
\displaystyle{\nabla\wedge H_\rho+i\omega \left(\varepsilon_\rho+i\frac{\sigma_\rho}{\omega}\right)E_\rho=0} \qquad &\mbox{in\ \ $\Omega$},\\
\displaystyle{\nu\wedge E_\rho|_{\partial\Omega}=\psi\in TH_{\text{Div}}^{-1/2}(\partial\Omega),}
\end{cases}
\end{equation}
where
\begin{equation}\label{eq:virtual medium}
\Omega; \varepsilon_\rho, \mu_\rho, \sigma_\rho=\begin{cases}
\varepsilon_0, \mu_0, 0\qquad &\mbox{in\ \ $\Omega\backslash D_\rho$},\\
\alpha_0\varepsilon_0, \beta_0\rho^2 \mu_0, \gamma_0\rho^{-2}\delta\quad &\mbox{in\ \ $D_\rho\backslash\overline{D}_{\rho/2}$},\\
\varepsilon_a, \mu_a, \sigma_a\qquad &\mbox{in\ \ $D_{\rho/2}$}.
\end{cases}
\end{equation}
Furthermore, by Lemma~\eqref{thm:trans opt},
\begin{equation}\label{eq:virtual map}
\Lambda_\rho=\widetilde{\Lambda}_\rho\;
\end{equation}
where
$\Lambda_\rho$ is the admittance map associated with the EM fileds $(E_\rho, H_\rho)$ in \eqref{eq:Maxwell virtual}.
Hence, in order to prove Theorem~\ref{thm:main}, it suffices to show the following result.

\begin{thm}\label{thm:main virtual}
Suppose $\omega$ is not an EM eigenvalue of the free-space Maxwell equations \eqref{eq:Maxwell free}. Let $(E_0, H_0)\in  H(\nabla\wedge;\Omega)\wedge H(\nabla\wedge;\Omega)$ and $(E_\rho, H_\rho)\in H(\nabla\wedge;\Omega)\wedge H(\nabla\wedge;\Omega)$ be solutions to \eqref{eq:Maxwell free} and \eqref{eq:Maxwell virtual}, respectively. Then there exists a positive constant $\rho_0$ such that for any $\rho<\rho_0$,
\begin{equation}\label{eq:main estimate}
\| \nu\wedge H_\rho-\nu\wedge H_0 \|_{TH^{-1/2}_{\text{\em Div}}(\partial\Omega)} \leq C \rho^3\|\psi\|_{TH^{-1/2}_{\text{\em Div}}(\partial\Omega)},
\end{equation}
where $C$ is a positive constant dependent only on $\rho_0, \omega, \alpha_0,\beta_0, \gamma_0$ and $D$, $\Omega$, but completely independent of $\rho$, ${\varepsilon}_a$, ${\mu}_a$ and ${\sigma}_a$ and $\psi$.
\end{thm}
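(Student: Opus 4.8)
My plan is to adapt the strategy of \cite{LiuSun} to the vector setting: replace $(E_\rho,H_\rho)$ by the ``scattered'' field generated by the small inclusion $D_\rho$, represent that field by layer potentials over $\partial D_\rho$, and extract the powers of $\rho$ by blowing $D_\rho$ back up to the fixed domain $D$ and Taylor-expanding the (smooth) Green kernels. Concretely, set $\mathbf{E}=E_\rho-E_0$ and $\mathbf{H}=H_\rho-H_0$. Since \eqref{eq:Maxwell virtual} and \eqref{eq:Maxwell free} share the same coefficients on $\Omega\backslash D_\rho$, the pair $(\mathbf{E},\mathbf{H})$ solves the source-free Maxwell system with the free-space parameters $\varepsilon_0,\mu_0$ in $\Omega\backslash\overline{D}_\rho$, together with $\nu\wedge\mathbf{E}|_{\partial\Omega}=0$. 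Because $\omega$ is not an EM eigenvalue of the free-space equations in $\Omega$ (hence, for $\rho$ small, not one for $\Omega\backslash\overline{D}_\rho$ with the electric boundary condition on both components), the electric boundary value problem there is well posed, and a Stratton--Chu representation — taken with respect to the Green's dyadic of $\Omega$ adapted to the condition $\nu\wedge\,\cdot\,|_{\partial\Omega}=0$ (equivalently, using the free-space formula and then inverting a $\rho$-independent, boundedly invertible boundary integral operator on $\partial\Omega$) — eliminates the $\partial\Omega$-contribution and yields
\[
\nu(x)\wedge\mathbf{H}(x)=\int_{\partial D_\rho}\bigl[\,\mathbb{K}_1(x,y)(\nu\wedge\mathbf{E})(y)+\mathbb{K}_2(x,y)(\nu\wedge\mathbf{H})(y)\,\bigr]\,ds(y),\qquad x\in\partial\Omega,
\]
where $\mathbb{K}_1(x,\cdot),\mathbb{K}_2(x,\cdot)$ are real-analytic near the origin (hence on $\overline{D}_\rho$ for all small $\rho$), with $C^k$-bounds uniform in $x\in\partial\Omega$; in particular this map carries the Cauchy data of $(\mathbf{E},\mathbf{H})$ on $\partial D_\rho$, even measured in $L^1$, continuously and smoothingly into $TH^{-1/2}_{\text{Div}}(\partial\Omega)$.

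Next I would blow up: write $x=\rho\eta$, $\eta\in D$, $\widehat{\mathbf{E}}(\eta)=\mathbf{E}(\rho\eta)$, etc. The surface measure on $\partial D_\rho$ contributes a factor $\rho^2$, and $\mathbb{K}_j(x,\rho\eta)=\mathbb{K}_j(x,0)+\rho\,\eta\cdot\nabla_y\mathbb{K}_j(x,0)+O(\rho^2)$ gives
\[
\nu\wedge\mathbf{H}\big|_{\partial\Omega}=\rho^2\Bigl[\mathbb{K}_1(x,0)\!\int_{\partial D}\!n\wedge\widehat{\mathbf{E}}+\mathbb{K}_2(x,0)\!\int_{\partial D}\!n\wedge\widehat{\mathbf{H}}\Bigr]+\rho^3(\text{first moments})+O\!\bigl(\rho^4\,\|(\widehat{\mathbf{E}},\widehat{\mathbf{H}})\|_{L^1(\partial D)}\bigr),
\]
$n$ the unit normal of $\partial D$. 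Splitting $D=(D\backslash\overline{D}_{1/2})\cup D_{1/2}$ and using $\int_{\partial V}n\wedge F=\int_V\nabla\wedge F$ together with the rescaled Maxwell identities $\nabla_\eta\wedge\widehat{\mathbf{E}}=i\rho\omega(\widehat\mu_\rho\widehat H_\rho-\mu_0\widehat H_0)$ and $\nabla_\eta\wedge\widehat{\mathbf{H}}=-i\rho\omega(\widehat\varepsilon_\rho\widehat E_\rho-\varepsilon_0\widehat E_0)+\rho\,\widehat\sigma_\rho\widehat E_\rho$ (with $\widehat\varepsilon_\rho,\widehat\mu_\rho,\widehat\sigma_\rho$ the rescaled parameters of \eqref{eq:virtual medium}), each zeroth moment becomes a volume integral over the shell $D\backslash\overline{D}_{1/2}$ plus a tangential-trace integral over $\partial D_{1/2}$. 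The shell volume integrals are $O(\rho)\|\psi\|$: the benign terms carry an explicit factor $\rho$ (or $\rho^3$, via $\widehat\mu_\rho=\beta_0\rho^2\mu_0$) times $L^1$-norms of the fields on the shell, while the one genuinely singular term $\rho\,\widehat\sigma_\rho\widehat E_\rho=\gamma_0\rho^{-1}\widehat E_\rho$ is absorbed provided $\|\widehat E_\rho\|_{L^1(D\backslash\overline{D}_{1/2})}=O(\rho^2)\|\psi\|$, a smallness in the shell sharper than energy conservation alone gives. The $\partial D_{1/2}$-trace integrals, and the $L^1(\partial D)$-norms of the Cauchy data appearing above, involve the field only through its behaviour just outside the shell and on $\partial D_{1/2}$; crucially, \emph{none} of these quantities is paired with the arbitrary core parameters $\varepsilon_a,\mu_a,\sigma_a$.

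Everything therefore reduces to the a priori bounds collected in the key lemma of Section 3 — and establishing these is the heart of the matter and the main obstacle. One needs: (i) a \emph{content-independent, $\rho$-independent} energy estimate $\|E_\rho\|_{H(\nabla\wedge;\Omega)}+\|H_\rho\|_{H(\nabla\wedge;\Omega)}\le C\|\psi\|$; (ii) the \emph{sharp shell estimate} $\|\widehat E_\rho\|_{L^1(D\backslash\overline{D}_{1/2})}=O(\rho^2)\|\psi\|$ together with $L^1$-boundedness of the rescaled Cauchy data on $\partial D$; and (iii) \emph{screening}: the tangential traces of $(E_\rho,H_\rho)$ on $\partial D_{1/2}$ are small uniformly in the core content, so that $D_{1/2}$ is felt from outside only through a content-independent approximate condition on $\partial D_{1/2}$. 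All three hinge on the high-conductivity shell $(D_\rho\backslash\overline{D}_{\rho/2};\,\alpha_0\varepsilon_0,\beta_0\rho^2\mu_0,\gamma_0\rho^{-2}\delta)$: its large conductivity supplies the dissipation that damps the near-resonant (``cloak-busting'') inclusions an unshielded core may contain — securing (i) — and the low-impedance/absorbing-layer effects that give (ii) and (iii). Making this precise for the \emph{vector} Maxwell system, rather than the scalar Helmholtz equation of \cite{LiuSun}, forces a new layer-potential analysis of the thin high-conducting transmission problem on the rescaled shell $D\backslash\overline{D}_{1/2}$; I expect this to be the hardest step, and it is also what fixes the exponent $3$, the surviving leading term being the ``polarizability'' of the shell, which scales like its $\rho^3$ volume. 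Granting the key lemma, one inserts its bounds into the expansion above and uses the smoothing property of the $\partial D_\rho$-integral operators to conclude $\|\nu\wedge H_\rho-\nu\wedge H_0\|_{TH^{-1/2}_{\text{Div}}(\partial\Omega)}\le C\rho^3\|\psi\|_{TH^{-1/2}_{\text{Div}}(\partial\Omega)}$ with $C$ independent of $\rho$, $\psi$, and $\varepsilon_a,\mu_a,\sigma_a$, which is the asserted estimate.
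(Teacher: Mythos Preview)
Your proposal has the right themes --- rescaling to the fixed domain $D$, layer-potential representations, and exploiting the shell's dissipation --- but there is a structural gap in how you plan to close the estimate. Your moment expansion demands, as item (ii), the a priori bound $\|\widehat E_\rho\|_{L^1(D\backslash\overline{D}_{1/2})}=O(\rho^2)\|\psi\|$ in order to absorb the singular conductivity contribution $\rho^{-1}\widehat E_\rho$ in the zeroth moment of $n\wedge\widehat{\mathbf H}$. But this is too strong: even \emph{a posteriori}, once the theorem is granted and combined with the shell energy identity, one only gets $\|\widehat E_\rho\|_{L^2(D\backslash\overline{D}_{1/2})}=O(\rho)\|\psi\|$, not $O(\rho^2)$. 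With only $O(\rho)$ available, your expansion yields $O(\rho^2)$ on $\partial\Omega$, not $O(\rho^3)$. Likewise, your items (i) (a $\rho$- and content-independent global $H(\nabla\wedge)$ bound) and (iii) (smallness of traces on $\partial D_{1/2}$, uniformly in the core) are assertions you neither prove nor reduce to something tractable; the paper does not establish them either, because it does not need them.

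The paper's mechanism is a \emph{bootstrap} that never opens up $D_\rho$ beyond the elementary energy identity. Taking real parts of $\int_\Omega(\nabla\wedge H_\rho)\cdot\overline{E_\rho}$ gives the \emph{cross} estimate
\[
\rho^{-2}\int_{D_\rho\backslash D_{\rho/2}}|E_\rho|^2\ \le\ C\,\|\psi\|\cdot X,\qquad X:=\|\nu\wedge(H_\rho-H_0)\|_{TH^{-1/2}_{\text{Div}}(\partial\Omega)},
\]
with the unknown $X$ on the right (Lemma~\ref{lem:1}). A trace argument then converts this into $\|(\nu\wedge E_\rho^+)(\rho\,\cdot)\|_{TH^{-1/2}(\partial D)}\le C\rho^{-1/2}\|\psi\|^{1/2}X^{1/2}$ (Lemma~\ref{lem:bc control}). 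The key lemma (Lemma~\ref{lem:crucial}) treats $D_\rho$ as a black box: for the \emph{exterior} free-space Maxwell problem in $\Omega\backslash\overline{D}_\rho$ with interior tangential data $\varphi$ on $\partial D_\rho$, it proves $X\le C\bigl(\rho^3\|\psi\|+\rho^2\|\varphi(\rho\,\cdot)\|_{TH^{-1/2}(\partial D)}\bigr)$ via layer potentials. Plugging in $\varphi=\nu\wedge E_\rho^+$ yields $X\le C\rho^3\|\psi\|+C\rho^{3/2}\|\psi\|^{1/2}X^{1/2}$, and Young's inequality absorbs the cross term to close at $X=O(\rho^3)\|\psi\|$. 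The idea you are missing is that the shell energy is not bounded outright but only \emph{relative to} $X$, and that this suffices; your direct scheme, by contrast, must over-request at step (ii) and therefore cannot reach the sharp exponent.
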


\begin{rem}
Qualitatively speaking, Theorem~\ref{thm:main virtual} states that the boundary EM effects due to an inhomogeneous EM inclusion supported in a small region $D_\rho$ with arbitrary contents in $D_{\rho/2}$ but enclosed by a thin layer of high conducting medium in $D_\rho\backslash\overline{D}_{\rho/2}$ is also small. In the literature, there are extensive studies on the scattering estimates due to small EM scatterers (see \cite{Ammari4,AmmVogVol,APRT}), and also the closely related low-frequency asymptotics of EM scattering (i.e., the Rayleigh approximation; see \cite{AmmNed,Kle,Mar,Ned}). However, the EM inclusions of the aforementioned studies have fixed contents, and indeed they are either perfectly conducting obstacles or EM mediums with piecewise constant material parameters. In Theorem~\ref{thm:main virtual}, the small EM inclusion of our current study has peculiar structures, and the quantative estimate \eqref{eq:main estimate} cannot be adapted from existing results in the literature.
\end{rem}

The rest of the paper is devoted to the proof of Theorem~\ref{thm:main virtual}. In order to simplify the exposition, we set
\[
\alpha_0=\beta_0=\gamma_0=1.
\]
We next derive three key lemmas.

\begin{lem}\label{lem:1}
The solutions of \eqref{eq:Maxwell free} and \eqref{eq:Maxwell virtual} satisfy
\begin{equation}\label{eq:estimate 1}
\int_{D_\rho\backslash D_{\rho/2}}|E_\rho|^2\ d\sigma_x\leq C \rho^2\|\psi\|_{TH_{\text{\em Div}}^{-1/2}(\partial\Omega)}\|\nu\wedge(H_\rho-H_0)\|_{TH^{-1/2}_{\text{\em Div}}(\partial\Omega)},
\end{equation}
where $C$ is a positive constant depending only on $\Omega$.
\end{lem}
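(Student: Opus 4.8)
The plan is to obtain \eqref{eq:estimate 1} from the natural energy identity for the Maxwell system \eqref{eq:Maxwell virtual}, exploiting that the conductivity term there dominates $\rho^{-2}$ times the $L^2$-mass of $E_\rho$ over the shell $D_\rho\backslash\overline{D}_{\rho/2}$.

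First I would record the generalized Stokes/Green formula
\[
\int_\Omega\big[(\nabla\wedge u)\cdot v-u\cdot(\nabla\wedge v)\big]\,dx=\langle\nu\wedge u,\ \pi_t v\rangle_{\partial\Omega},\qquad u,v\in H(\nabla\wedge;\Omega),
\]
where $\pi_t v$ is the tangential-component trace of $v$ and $\langle\,\cdot\,,\,\cdot\,\rangle_{\partial\Omega}$ is the $L^2$-based duality pairing between $TH^{-1/2}_{\text{Div}}(\partial\Omega)$ and its dual $TH^{-1/2}_{\text{Curl}}(\partial\Omega)$ (defined with the surface curl in place of the surface divergence); see \cite{Ned,Lei}. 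Taking $u=E_\rho$ and $v=\overline{H_\rho}$, and substituting $\nabla\wedge E_\rho=i\omega\mu_\rho H_\rho$ and $\nabla\wedge\overline{H_\rho}=(i\omega\varepsilon_\rho+\sigma_\rho)\overline{E_\rho}$ from \eqref{eq:Maxwell virtual}, the left side equals $i\omega\int_\Omega(\mu_\rho H_\rho)\cdot\overline{H_\rho}-i\omega\int_\Omega(\varepsilon_\rho E_\rho)\cdot\overline{E_\rho}-\int_\Omega(\sigma_\rho E_\rho)\cdot\overline{E_\rho}$. Each of these integrals is real because $\varepsilon_\rho,\mu_\rho,\sigma_\rho$ are real symmetric matrices, so taking real parts gives
\[
\int_\Omega(\sigma_\rho E_\rho)\cdot\overline{E_\rho}\,dx=-\,\mathrm{Re}\,\langle\nu\wedge E_\rho,\ \pi_t\overline{H_\rho}\rangle_{\partial\Omega}.
\]
The same computation applied to \eqref{eq:Maxwell free} (where $\sigma\equiv0$ and $\varepsilon_0,\mu_0$ are the identity) gives $\mathrm{Re}\,\langle\nu\wedge E_0,\pi_t\overline{H_0}\rangle_{\partial\Omega}=0$.

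Next I would use that $\nu\wedge E_\rho=\nu\wedge E_0=\psi$ on $\partial\Omega$: writing $\pi_t\overline{H_\rho}=\pi_t\overline{H_0}+\pi_t\overline{(H_\rho-H_0)}$ and discarding the first summand via the line above,
\[
\int_\Omega(\sigma_\rho E_\rho)\cdot\overline{E_\rho}\,dx=-\,\mathrm{Re}\,\langle\psi,\ \pi_t\overline{(H_\rho-H_0)}\rangle_{\partial\Omega}\ \le\ \big|\langle\psi,\ \pi_t\overline{(H_\rho-H_0)}\rangle_{\partial\Omega}\big|.
\]
Bounding the duality pairing and using that $v\mapsto\nu\wedge v$ is an isomorphism from $TH^{-1/2}_{\text{Curl}}(\partial\Omega)$ onto $TH^{-1/2}_{\text{Div}}(\partial\Omega)$, so that $\|\pi_t(H_\rho-H_0)\|_{TH^{-1/2}_{\text{Curl}}}\le C\|\nu\wedge(H_\rho-H_0)\|_{TH^{-1/2}_{\text{Div}}}$, the right side is at most $C\,\|\psi\|_{TH^{-1/2}_{\text{Div}}}\|\nu\wedge(H_\rho-H_0)\|_{TH^{-1/2}_{\text{Div}}}$ with $C=C(\Omega)$. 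Finally, with $\gamma_0=1$ one has $\sigma_\rho=\rho^{-2}I$ on $D_\rho\backslash\overline{D}_{\rho/2}$ and $\sigma_\rho=\sigma_a\ge0$ on $D_{\rho/2}$, whence $\int_\Omega(\sigma_\rho E_\rho)\cdot\overline{E_\rho}\ge\rho^{-2}\int_{D_\rho\backslash D_{\rho/2}}|E_\rho|^2$; multiplying through by $\rho^2$ yields \eqref{eq:estimate 1}.

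I expect the only delicate point to be functional-analytic bookkeeping: reading $\int_{\partial\Omega}(\nu\wedge E_\rho)\cdot\overline{H_\rho}$ as the $TH^{-1/2}_{\text{Div}}$--$TH^{-1/2}_{\text{Curl}}$ duality pairing so that the Green formula is legitimate for fields merely in $H(\nabla\wedge;\Omega)$, and controlling $\|\pi_t(H_\rho-H_0)\|_{TH^{-1/2}_{\text{Curl}}}$ by $\|\nu\wedge(H_\rho-H_0)\|_{TH^{-1/2}_{\text{Div}}}$; both are standard in Maxwell trace theory. I would emphasize that the gain of $\rho^2$ comes solely from the $\rho^{-2}$ scaling of the conductivity in the thin layer, with no cancellation used, which is precisely why the estimate is completely insensitive to $\varepsilon_a,\mu_a,\sigma_a$.
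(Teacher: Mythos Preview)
Your proposal is correct and follows essentially the same approach as the paper: both derive the energy identity by pairing the Maxwell system with the conjugate fields, take real parts to isolate the conductivity term, subtract the corresponding identity for $(E_0,H_0)$ to replace $H_\rho$ by $H_\rho-H_0$ on the boundary, and then invoke the duality on $TH^{-1/2}_{\text{Div}}(\partial\Omega)$. The only cosmetic difference is that the paper phrases the boundary pairing via the skew-symmetric form $\mathcal{B}(\mathbf{j},\mathbf{m})=\int_{\partial\Omega}\mathbf{j}\cdot(\mathbf{m}\wedge\nu)\,ds$ on $TH^{-1/2}_{\text{Div}}\times TH^{-1/2}_{\text{Div}}$, whereas you use the $TH^{-1/2}_{\text{Div}}$--$TH^{-1/2}_{\text{Curl}}$ duality together with the isomorphism $v\mapsto\nu\wedge v$; these are equivalent.
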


\begin{proof}
Inner-producting both sides of the second equation in \eqref{eq:Maxwell virtual} by $\overline{E}_\rho$, and integrating by parts, we have
\begin{equation}\label{eq:lem1 1}
\begin{split}
&\int_{\Omega}-i\omega\left(\varepsilon_\rho+i\frac{\sigma_\rho}{\omega}\right) E_\rho\cdot\overline{E}_\rho \ d\sigma_x=\int_{\Omega} (\nabla\wedge H_\rho)\cdot \overline{E}_\rho\ d\sigma_x\\
=& \int_{\Omega} H_\rho\cdot(\nabla\wedge\overline{E}_\rho)\ d\sigma_x-\int_{\partial\Omega} (\nu\wedge\overline{E}_\rho)\cdot H_\rho\ d s_x\\
=&\int_{\Omega} H_\rho\cdot(-i\omega\overline{\mu_\rho}\overline{H}_\rho)\ d\sigma_x+\int_{\partial\Omega} (\nu\wedge\overline{E}_\rho)\cdot [\nu\wedge(\nu\wedge H_\rho)]\ ds_x
\end{split}
\end{equation}
In \eqref{eq:lem1 1}, we have made use of the decomposition that
\[
H_\rho|_{\partial\Omega}=(H_\rho|_{\partial\Omega})_t+\nu (H_\rho|_{\partial\Omega})_\nu,
\]
where the tangential component $(H_\rho|_{\partial\Omega})_t$ is $-\nu\wedge(\nu\wedge (H_\rho|_{\partial\Omega}))$ and the normal component is $(H_\rho|_{\partial\Omega})_\nu$ is $\langle\nu, H_\rho|_{\partial\Omega} \rangle$. 
By taking the real parts of both sides of \eqref{eq:lem1 1}, we have
\begin{equation}\label{eq:lem1 2}
\begin{split}
&\int_{D_{\rho/2}} \sigma_a E_\rho\cdot\overline{E}_\rho\ d\sigma_x+ \rho^{-2}\int_{D_\rho\backslash D_{\rho/2}} |E_\rho|^2\ d\sigma_x\\
=&\Re\int_{\partial\Omega}(\nu\wedge \overline{E}_\rho)\cdot [\nu\wedge(\nu\wedge H_\rho)]\ ds_x.
\end{split}
\end{equation}
On the other hand, it is straightforward to verify that
\begin{equation}\label{eq:lem1 3}
0=\Re \int_{\partial\Omega} (\nu\wedge\overline{E}_0)\cdot [\nu\wedge (\nu\wedge H_0)]\ ds_x.
\end{equation}
We shall make use of the following fact that the skew-symmetric bilinear form
\begin{equation}\label{eq:duality}
\begin{split}
&\mathcal{B}: TH_{\text{Div}}^{-1/2}(\partial\Omega)\wedge TH_{\text{Div}}^{-1/2}(\partial\Omega)\ \rightarrow\quad  \mathbb{C},\\
&\hspace*{3cm} (\mathbf{j},\mathbf{m})\hspace*{1.45cm} \rightarrow\quad \mathcal{B}(\mathbf{j},\mathbf{m})=\int_{\partial\Omega} \mathbf{j}\cdot (\mathbf{m}\wedge \nu)\ ds
\end{split}
\end{equation}
defines a non-degenerate duality product on $TH_{\text{Div}}^{-1/2}(\partial\Omega)$ (cf. \cite{CosLou}). Subtracting \eqref{eq:lem1 3} from \eqref{eq:lem1 2}, one has
\[
\begin{split}
&\int_{D_{\rho/2}} \sigma_a E_\rho\cdot\overline{E}_\rho\ d\sigma_x+\rho^{-2}\int_{D_\rho\backslash D_{\rho/2}}|E_\rho|^2\ d\sigma_x\\
=&\Re \int_{\partial\Omega}\overline{\psi}\cdot [\nu\wedge(\nu\wedge(H_\rho-H_0))]\ ds_x,
\end{split}
\]
which together with the duality \eqref{eq:duality} yields \eqref{eq:estimate 1}.
\end{proof}

In the sequel, we let
\[
\nu\wedge E_\rho^-(x)\ \ \ (\mbox{resp.}\ \ \nu\wedge H_\rho^-(x))\quad \mbox{on\ \ $\partial D_\rho$}
\]
denote the tangential component of $E_\rho$ (resp. $H_\rho$) on $\partial D_\rho$ when one approaches $\partial D_\rho$ from the interior of $D_\rho$. Similarly, we let
\[
\nu\wedge E_\rho^+(x)\ \ \ (\mbox{resp.}\ \ \nu\wedge H_\rho^-(x))\quad \mbox{on\ \ $\partial D_\rho$}
\]
denote the tangential component of $E_\rho$ (resp. $H_\rho$) on $\partial D_\rho$ when one approaches $D_\rho$ from the exterior of $D_\rho$.

\begin{lem}\label{lem:bc control}
The solutions to \eqref{eq:Maxwell free} and \eqref{eq:Maxwell virtual} satisfy
\begin{equation}\label{eq:bc control 1}
\begin{split}
&\left\| (\nu\wedge E_\rho^-) (\rho\ \cdot)\right\|^2_{TH^{-1/2}(\partial D)}\\
\leq & C \rho^{-1}\bigg|1+\omega^2\rho^2\left( 1+i\frac{\rho^{-2}}{\omega} \right )  \bigg|^2 \| \psi \|_{TH^{-1/2}_{\text{\em Div}}(\partial \Omega)} \left\| \nu\wedge (H_\rho-H_0) \right\|_{TH^{-1/2}_{\text{\em Div}}(\partial\Omega)},
\end{split}
\end{equation}
and hence by the transmission condition across $\partial D_\rho$
\begin{equation}\label{eq:bc control 2}
\begin{split}
&\left\| (\nu\wedge E_\rho^+) (\rho\ \cdot)\right\|^2_{TH^{-1/2}(\partial D)}\\
\leq & C \rho^{-1} \bigg|1+\omega^2\rho^2\left( 1+i\frac{\rho^{-2}}{\omega} \right )  \bigg|^2 \| \psi \|_{TH^{-1/2}_{\text{\em Div}}(\partial \Omega)} \left\| \nu\wedge (H_\rho-H_0) \right\|_{TH^{-1/2}_{\text{\em Div}}(\partial\Omega)},
\end{split}
\end{equation}
where $C$ is a positive constant dependent only on $D$ and $\Omega$, but independent of $\psi$ and $\rho$.
\end{lem}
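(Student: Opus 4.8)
The plan is to estimate the tangential trace of $E_\rho$ on the rescaled sphere $\partial D$ by testing the Maxwell system \eqref{eq:Maxwell virtual} against a suitable extension of a given boundary datum, thereby converting an interior energy quantity into a boundary pairing on $\partial\Omega$, and then invoking Lemma~\ref{lem:1}. First I would observe that $\nu\wedge E_\rho^-$ on $\partial D_\rho$ lies in $TH^{-1/2}(\partial D_\rho)$ and that, by duality, its $TH^{-1/2}(\partial D_\rho)$-norm is realized by pairing it against an element $\phi\in TH^{1/2}(\partial D_\rho)$ (more precisely against a field in the dual space, keeping track of the $\mathrm{Div}$/$\mathrm{Curl}$ structure). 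The key construction is to solve, in $D_\rho$, an auxiliary interior Maxwell (or curl-curl) boundary value problem with tangential data $\phi$ on $\partial D_\rho$, using the \emph{interior} medium $(\varepsilon_\rho,\mu_\rho,\sigma_\rho)$ from \eqref{eq:virtual medium} on $D_\rho$; call the resulting fields $(\Phi,\Psi)$. Green's formula / integration by parts for the Maxwell operator then gives an identity expressing $\int_{\partial D_\rho}(\nu\wedge E_\rho^-)\cdot(\text{something built from }\Psi)$ in terms of a volume integral over $D_\rho$ of $E_\rho$ against $\Phi$ (and $H_\rho$ against $\Psi$), the cross terms canceling because both pairs solve the same Maxwell system with the same coefficients inside $D_\rho$.

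The next step is the scaling analysis. Rescaling $x=\rho y$ maps $D_\rho$ to $D$, and under this rescaling the medium $(\varepsilon_0,\rho^2\mu_0,\rho^{-2}\delta)$ on the shell and $(\varepsilon_a,\mu_a,\sigma_a)$ in the core are designed precisely so that the rescaled system is a fixed (i.e. $\rho$-independent in its principal part) Maxwell-type system on $D$, with the frequency-dependent lower-order terms collecting into the factor $1+\omega^2\rho^2(1+i\rho^{-2}/\omega)$ visible in the statement. I would track how the $H(\mathrm{curl};D_\rho)$ norms of $E_\rho$ and of the auxiliary field transform: the tangential-trace norm picks up the advertised $\rho^{-1}$, and the volume integral $\int_{D_\rho}|E_\rho|^2$ is exactly the quantity controlled by Lemma~\ref{lem:1}, which supplies the factor $\rho^2\|\psi\|\,\|\nu\wedge(H_\rho-H_0)\|$. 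Combining the Green identity, the Cauchy–Schwarz inequality on $D_\rho$, the stability bound for the auxiliary problem (uniform in $\rho$ after rescaling, using that the coefficients are uniformly elliptic on $D$ and $\omega$ is not a resonance of the rescaled problem), and Lemma~\ref{lem:1}, yields \eqref{eq:bc control 1}. Then \eqref{eq:bc control 2} is immediate from the transmission condition $\nu\wedge E_\rho^- = \nu\wedge E_\rho^+$ across $\partial D_\rho$, which holds because $\nu\wedge E_\rho$ is continuous for any $H(\nabla\wedge;\Omega)$ solution.

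The main obstacle I anticipate is the careful bookkeeping of the scaling for the \emph{negative}-order trace norm $TH^{-1/2}(\partial D)$ together with the $\mathrm{Div}$-condition, and ensuring that the auxiliary interior problem on $D_\rho$ is well posed with a stability constant independent of $\rho$ — this is delicate precisely because the shell conductivity $\rho^{-2}\delta$ blows up, so one must use the rescaled formulation where the coefficients are $O(1)$ and verify that the rescaled operator (with the peculiar shell medium $(\varepsilon_0,\rho^2\mu_0,\rho^{-2}\delta)$) remains invertible uniformly; the factor $|1+\omega^2\rho^2(1+i\rho^{-2}/\omega)|^2$ is exactly what one loses from the $\rho$-dependent part of the rescaled zeroth-order term, and must be isolated rather than absorbed. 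A secondary technical point is choosing the auxiliary test field so that its own tangential trace on $\partial D_\rho$ is controlled with the right power of $\rho$ while its interior $L^2$-norm (against which $E_\rho$ is paired) is also controlled; balancing these two requirements is where the specific exponent $\rho^{-1}$ on the left of \eqref{eq:bc control 1} is pinned down.
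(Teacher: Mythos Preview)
Your proposal has a genuine gap in two places. First, you plan to build the auxiliary field $(\Phi,\Psi)$ by solving an interior Maxwell problem on all of $D_\rho$ with the full interior medium \eqref{eq:virtual medium}, including the arbitrary core $(\varepsilon_a,\mu_a,\sigma_a)$ in $D_{\rho/2}$. Any stability constant for that boundary value problem will then depend on $\varepsilon_a,\mu_a,\sigma_a$ (and for some choices the problem may even be resonant), so the resulting $C$ cannot be independent of the cloaked content, which is precisely what the lemma requires. Second, if $(E_\rho,H_\rho)$ and $(\Phi,\Psi)$ both solve the \emph{same} Maxwell system in $D_\rho$, the volume integrals in the Maxwell Green identity cancel completely and you are left only with a relation between boundary traces on $\partial D_\rho$; there is no bulk term of the form $\int_{D_\rho} E_\rho\cdot\Phi$ to which Lemma~\ref{lem:1} (which controls $\|E_\rho\|_{L^2(D_\rho\setminus D_{\rho/2})}^2$) could be applied.

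The paper avoids both difficulties by a more elementary, \emph{scalar} argument. It treats the components $E_\rho^k$ separately and realizes $\|E_\rho^k(\rho\,\cdot)\|_{H^{-1/2}(\partial D)}$ by duality against scalar test functions $\phi\in H^{1/2}(\partial D)$. For each such $\phi$ it picks a scalar extension $u\in H^2(D)$ with $u|_{\partial D}=0$, $\partial_\nu u|_{\partial D}=\phi$, $\|u\|_{H^2(D)}\le C\|\phi\|_{H^{1/2}(\partial D)}$, and --- this is the key point --- $u\equiv 0$ on $D_{1/2}$. The support condition confines all integrals to the shell $D\setminus D_{1/2}$, where the (rescaled) medium is constant and isotropic, so each component of the rescaled field satisfies the scalar Helmholtz equation $\Delta E^k+\omega^2\rho^2(1+i\rho^{-2}/\omega)E^k=0$; the arbitrary core medium never enters. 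A single application of the scalar Green identity then expresses the boundary pairing as an integral over $D\setminus D_{1/2}$ of $E^k$ against $u$ and $\Delta u$, producing the factor $\bigl|1+\omega^2\rho^2(1+i\rho^{-2}/\omega)\bigr|$, and the remaining $\|E_\rho\|_{L^2(D_\rho\setminus D_{\rho/2})}$ is finished off by Lemma~\ref{lem:1}. The exponent $\rho^{-1}$ comes from the volume scaling $\|E_\rho(\rho\,\cdot)\|_{L^2(D\setminus D_{1/2})}=\rho^{-3/2}\|E_\rho\|_{L^2(D_\rho\setminus D_{\rho/2})}$ combined with the $\rho^2$ from Lemma~\ref{lem:1}. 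No auxiliary PDE needs to be solved, and no constant depends on the core medium.
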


\begin{proof}
We let $E_\rho=(E_\rho^1, E_\rho^2, E_\rho^3)$. 
Clearly, it suffices to show that for $k=1,2,3$,
\begin{equation}\label{eq:bc control 3}
\begin{split}
&\left\|E_\rho^k (\rho\ \cdot)\right\|^2_{H^{-1/2}(\partial D)}\\
\leq & C \rho^{-1}\bigg|1+\omega^2\rho^2\left( 1+i\frac{\rho^{-2}}{\omega} \right )  \bigg|^2 \| \psi \|_{TH^{-1/2}_{\text{ Div}}(\partial \Omega)} \left\| \nu\wedge (H_\rho-H_0) \right\|_{TH^{-1/2}_{\text{ Div}}(\partial\Omega)}\;.
\end{split}
\end{equation}
Our proof begins with the following duality identity
\begin{equation}\label{eq:duality}
\begin{split}
\left\| E_\rho^k(\rho\ \cdot) \right\|_{H^{-1/2}(\partial D)}
=\sup_{\|\phi\|_{H^{1/2}(\partial\Omega)}\leq 1}\bigg| \int_{\partial D} E_\rho^k(\rho x)\cdot \phi(x)\ ds_x \bigg|.
\end{split}
\end{equation}
For any $\phi\in H^{1/2}(\partial D)$, there exists $u\in H^2(D)$ such that (see Theorem 14.1 in \cite{Wlo})\\

\begin{enumerate}
\item[(i)]~~$u=0$ on $\partial D$,\\
\item[(ii)]~~$\frac{\partial u}{\partial \nu}=\phi$ on $\partial D$,\\
\item[(iii)]~~$\|u\|_{H^2(D)}\leq C \|\phi\|_{H^{1/2}(\partial D)}$,\\
\item[(iv)]~~$u=0$ in $D_{1/2}$.
\end{enumerate}
Then
\begin{equation}\label{eq:bc 4}
\int_{\partial D} E_\rho^k(\rho x)\cdot \phi(x)\ ds_x=\int_{\partial D} E_\rho^k(\rho x)\cdot \frac{\partial u(x)}{\partial \nu(x)}\ ds_x.
\end{equation}
For $y\in D_\rho$, let
\[
x:=\frac{y}{\rho}\in D.
\]
Set
\[
E(x):=E_\rho(\rho x)=E_\rho(y),\quad x\in D.
\]
Since
\begin{equation}\label{eq:bc 5}
\begin{split}
 \nabla_y\wedge E_\rho(y)-i\omega H_\rho(y)&=0,\\
\nabla_y\wedge H_\rho(y)+i\omega\left( 1+i\frac{\rho^{-2}}{\omega} \right ) E_\rho(y)&=0,
\end{split}
\end{equation}
for $y\in D_\rho\backslash\overline{D}_{\rho/2}$, it is easily verified that
\begin{equation}\label{eq:bc 6}
\begin{split}
& \nabla_x\wedge E(x)=i\omega \rho H(x),\\
& \nabla_x\wedge H(x)=-i\omega\rho\left( 1+i\frac{\rho^{-2}}{\omega}\right ) E(x),\quad x\in D\backslash\overline{D}_{1/2}.
\end{split}
\end{equation}
Then, by \eqref{eq:bc 4}--\eqref{eq:bc 6}, and Green's formula, we have
\begin{equation}\label{eq:bc 7}
\begin{split}
&\int_{\partial D} E_\rho^k(\rho x)\cdot\phi(x)\ ds_x\\
=&\int_{\partial D} E^k(x)\cdot\frac{\partial u}{\partial \nu}(x)\ ds_x\\
=&\int_{\partial D} E^k(x)\cdot\frac{\partial u}{\partial\nu}(x)-\frac{\partial E^k}{\partial\nu}(x)\cdot u(x)\ ds_x\\
=&\int_{D} E^k(x)\cdot \Delta u(x)-\Delta E^k(x)\cdot u(x)\ d\sigma_x.
\end{split}
\end{equation}
By \eqref{eq:bc 6}, it is straightforward to show that
\begin{equation}\label{eq:bc 8}
\Delta E(x)+\omega^2\rho^2\left(1+i\frac{\rho^{-2}}{\omega} \right ) E(x)=0,\quad x\in D\backslash\overline{D}_{1/2}.
\end{equation}
Therefore, from \eqref{eq:bc 7} and \eqref{eq:bc 8}, it follows directly that
\begin{equation}\label{eq:bc 9}
\begin{split}
& \int_{\partial D} E_\rho^k(\rho x)\cdot \phi(x)\ ds_x\\
=&\int_{D} E^k\cdot \Delta u-u\cdot \Delta E^k\ d\sigma_x\\
=&\left[ 1+\omega^2\rho^2\left( 1+i\frac{\rho^{-2}}{\omega} \right ) \right ]\int_{D\backslash D_{1/2}} E^k\cdot (u+\Delta u)\ d\sigma_x,
\end{split}
\end{equation}
hence
\begin{equation}\label{eq:bc 10}
\begin{split}
& \bigg| \int_{\partial D} E_\rho^k(\rho x)\cdot \phi(x) \ ds_x\bigg |\\
\leq & \bigg| 1+\omega^2\rho^2\left( 1+i\frac{\rho^{-2}}{\omega} \right ) \bigg | \|E\|_{L^2(D\backslash D_{1/2})} \|\phi\|_{H^{1/2}(\partial D)^3}
\end{split}
\end{equation}
Using the relation
\[
\| E \|_{L^2(D\backslash D_{1/2})}=\| E_\rho(\rho\ \cdot) \|_{L^2(D\backslash D_{1/2})}=\rho^{-3/2}\| E_\rho \|_{L^2(D_\rho\backslash D_{\rho/2})},
\]
we have from \eqref{eq:bc 10} that
\[
\begin{split}
& \| E_\rho^k(\rho\ \cdot) \|_{H^{-1/2}(\partial D)}\\
\leq & \bigg| 1+\omega^2\rho^2\left( 1+i\frac{\rho^{-2}}{\omega} \right)  \bigg| \| E\|_{L^2(D\backslash D_{1/2})}\\
\leq & \rho^{-3/2} \bigg| 1+\omega^2\rho^2\left( 1+i\frac{\rho^{-2}}{\omega} \right)  \bigg| \|E_\rho\|_{L^2(D_\rho\backslash D_{\rho/2})},
\end{split}
\]
which together with \eqref{eq:estimate 1} in Lemma~\ref{lem:1} immediately implies \eqref{eq:bc control 3}.

The proof is now completed.

\end{proof}

%
%

The next lemma is of crucial importance and its proof will be given in Section~3.

\begin{lem}\label{lem:crucial}
Suppose $\omega$ is not an eigenvalue of the free-space Maxwell equations \eqref{eq:Maxwell free}. Let $E_0\in H(\nabla\wedge;\Omega)$ and $H_0\in H(\nabla\wedge;\Omega)$ be the solutions to \eqref{eq:Maxwell free}.  Let $\tau\in\mathbb{R}_+$ and let
\[
\varphi\in TH_{\text{\em Div}}^{-1/2}(\partial D_\tau).
\]
Consider the Maxwell equations
\begin{equation}\label{eq:auxiliary}
\begin{cases}
& \nabla\wedge E_\tau-i\omega H_\tau=0\qquad \mbox{in\ \ $\Omega\backslash\overline{D}_\tau$},\\
& \nabla\wedge H_\tau+i\omega E_\tau=0\qquad \mbox{in\ \ $\Omega\backslash\overline{D}_\tau$},\\
& \nu\wedge E_\tau=\varphi\quad\mbox{on\ \ $\partial D_\tau$},\\
& \nu\wedge E_\tau=\psi\quad \mbox{on\ \ $\partial\Omega$}.
\end{cases}
\end{equation}
Then there exists a constant $\tau_0\in\mathbb{R}_+$ such that for any $\tau<\tau_0$,
\begin{equation}\label{eq:crucial}
\begin{split}
&\left\| \nu\wedge(H_\tau-H_0) \right\|_{TH^{-1/2}(\partial\Omega)}\\
\leq & C\left( \tau^3\|\psi\|_{TH^{-1/2}_{\text{\em Div}}(\partial\Omega)} + \tau^2\|\varphi(\tau\ \cdot)\|_{TH^{-1/2}(\partial D)} \right ),
\end{split}
\end{equation}
where $C$ is a generic positive constant dependent only on $\tau_0, \omega$ and $\Omega$, $D$, but independent of $\tau$ and $\varphi$, $\psi$.
\end{lem}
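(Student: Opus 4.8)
## Proof Proposal for Lemma~\ref{lem:crucial}

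The plan is to treat the difference fields $(E_\tau - E_0, H_\tau - H_0)$ in the exterior region $\Omega\backslash\overline{D}_\tau$ as solutions of the homogeneous Maxwell system driven entirely by the data mismatch on the inner boundary $\partial D_\tau$, and then to bound their boundary trace on $\partial\Omega$ by a layer-potential/integral-representation argument that tracks the $\tau$-scaling explicitly. First I would set $W := E_\tau - E_0$, $V := H_\tau - H_0$; these satisfy $\nabla\wedge W - i\omega V = 0$ and $\nabla\wedge V + i\omega W = 0$ in $\Omega\backslash\overline{D}_\tau$, with $\nu\wedge W = \varphi - \nu\wedge E_0$ on $\partial D_\tau$ and $\nu\wedge W = 0$ on $\partial\Omega$. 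Since $\omega$ is not a free-space eigenvalue, the exterior boundary value problem with these two tangential traces is well-posed, and $\nu\wedge V|_{\partial\Omega}$ depends continuously on the data $\nu\wedge W|_{\partial D_\tau}$; the whole point is to get the $\tau$-dependence of that dependence sharp.

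The key steps, in order, would be: (1) represent $W$ (equivalently the pair $(W,V)$) in $\Omega\backslash\overline{D}_\tau$ using the Maxwell layer potentials on $\partial D_\tau$ and $\partial\Omega$, with densities determined by the tangential traces; the density on $\partial\Omega$ is forced to produce zero tangential electric trace there, and the density on $\partial D_\tau$ carries $\varphi - \nu\wedge E_0|_{\partial D_\tau}$. (2) Rescale $\partial D_\tau$ to the fixed boundary $\partial D$ via $x \mapsto x/\tau$, and bookkeep how the electromagnetic single/double layer operators and their mapping norms behave under this dilation — the fundamental solution $\Phi_\omega(x) = e^{i\omega|x|}/(4\pi|x|)$ scales so that the leading contribution of a density supported on $\partial D_\tau$, when evaluated at the far boundary $\partial\Omega$, comes with a factor picked up from the surface measure $ds$ on $\partial D_\tau$ (which is $\tau^2\,ds$ on $\partial D$) together with the regularity of $\Phi_\omega$ away from the origin. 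This is where the powers $\tau^3$ and $\tau^2$ emerge: the $\tau^2$ from the surface-measure Jacobian multiplying the rescaled density norm $\|\varphi(\tau\,\cdot)\|_{TH^{-1/2}(\partial D)}$, and the extra power in the $\tau^3\|\psi\|$ term from the additional vanishing of $\nu\wedge E_0$ near the origin (since $E_0$ is smooth near $0$, its tangential trace on $\partial D_\tau$ is $O(\tau)$ in the appropriate norm, controlled by $\|\psi\|$ through interior elliptic estimates for the free-space system). (3) Combine these scalings with the continuity of the $\partial\Omega$-to-$\partial\Omega$ correction operator (bounded uniformly in small $\tau$, again by the eigenvalue assumption) to arrive at \eqref{eq:crucial}.

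The main obstacle I expect is Step (2): making the dilation-scaling of the electromagnetic layer potentials rigorous and uniform. Unlike the scalar Helmholtz case treated in \cite{LiuSun}, here one works with $TH^{-1/2}_{\text{Div}}$ spaces and vector potentials whose continuity properties involve the surface divergence, so one must verify that the rescaled boundary integral operators are uniformly bounded as $\tau \to 0$ and that the near-field singularity of $\Phi_\omega$ does not spoil the clean power counting — in particular that cross terms between the $\partial D_\tau$ density and the $\partial\Omega$ density, mediated by $\Phi_\omega$ evaluated between the two well-separated surfaces, are genuinely smoothing and contribute only lower-order-in-$\tau$ errors. A secondary technical point is justifying that the free-space field $E_0$ satisfies $\|\nu\wedge E_0\|_{TH^{-1/2}(\partial D_\tau)} \le C\tau\|\psi\|_{TH^{-1/2}_{\text{Div}}(\partial\Omega)}$ with the stated power of $\tau$; this needs interior $H^2$ (or better) regularity of $(E_0,H_0)$ on a fixed neighborhood of the origin — valid since the medium is homogeneous there — followed by a trace estimate that captures the shrinking of $\partial D_\tau$. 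Once these scaling lemmas are in place, assembling \eqref{eq:crucial} is routine.
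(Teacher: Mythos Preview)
Your overall strategy---subtracting $(E_0,H_0)$, representing the difference via Maxwell layer potentials, rescaling $\partial D_\tau$ to $\partial D$, and tracking powers of $\tau$---matches the paper's route. The paper organizes it slightly differently: it first solves an exterior scattering problem on $\mathbb{R}^3\backslash\overline{D}_\tau$ with the Silver--M\"uller radiation condition carrying the inner data $\varphi-\nu\wedge E_0$ (Lemma~\ref{lem:s1}, split further into Propositions~\ref{lem:s11} and~\ref{lem:s12} according to the two pieces of that data), and then solves a correction problem on $\Omega\backslash\overline{D}_\tau$ with zero inner data and outer data $\nu\wedge U_\tau$ (Lemma~\ref{lem:s2}). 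Your direct two-boundary ansatz is a legitimate variant of this.

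There is, however, a genuine gap in your power counting for the $\tau^3\|\psi\|$ term. You attribute the extra factor of $\tau$ to ``the additional vanishing of $\nu\wedge E_0$ near the origin,'' claiming that the tangential trace of $E_0$ on $\partial D_\tau$ is $O(\tau)$. This is false: $E_0$ is smooth near the origin but generically \emph{nonzero} there, so after rescaling one has $(\nu\wedge E_0)(\tau x')=\nu(x')\wedge E_0(0)+O(\tau)$ on $\partial D$, and $\|\nu\wedge E_0(\tau\,\cdot)\|_{TH^{-1/2}(\partial D)}$ is $O(1)$, not $O(\tau)$. Your surface-measure bookkeeping would then yield only $\tau^2\|\psi\|$ for this contribution. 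The correct source of the extra $\tau$ is a \emph{cancellation in the far field}, not smallness of the data: the problem with inner datum $\nu\wedge E_0$ is precisely scattering of a smooth incident Maxwell field by a small perfectly conducting obstacle, and the leading far-field term is the electric/magnetic dipole (polarization-tensor) contribution of order $\tau^3$---there is no $\tau^2$ ``monopole'' term for Maxwell fields. Equivalently, in the representation $U(x)=\nabla_x\wedge\int_{\partial D_\tau}G(x,y)\mathbf{a}(y)\,ds_y$, the apparent $O(\tau^2)$ contribution $\nabla_xG(x,0)\wedge\int_{\partial D_\tau}\mathbf{a}\,ds$ at $x\in\partial\Omega$ must be shown to vanish to leading order. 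The paper does not carry this out by hand; it invokes the classical low-frequency asymptotics of \cite{Kle} via the argument of Corollary~3.2 in \cite{APRT}. Your proposal needs either this input or an explicit moment-cancellation argument to recover the sharp power $\tau^3$.
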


It should be emphasized that in the estimate \eqref{eq:crucial}, the norm for $\varphi(\tau\ \cdot)$ is $TH^{-1/2}(\partial D)$ though $\varphi(\tau\ \cdot)\in TH^{-1/2}_{\text{Div}}(\partial D)$, and in this sense, the estimate is ``non-standard".

We are in position to present the proof of Theorem~\ref{thm:main}.

\begin{proof}[Proof of Theorem~\ref{thm:main}]

By taking $\tau=\rho$ and $\varphi=\nu\wedge E_\rho^+|_{\partial D_\rho}$ in Lemma~\ref{lem:crucial}, we have
\begin{equation}\label{eq:m1}
\begin{split}
& \|\nu\wedge(H_\rho-H_0) \|_{TH^{-1/2}_{\text{Div}}(\partial \Omega)}\\
\leq & C_1\left( \rho^3\|\psi\|_{TH_{\text{Div}}}^{-1/2}(\partial\Omega)+\rho^2\| (\nu\wedge E_\rho^+)(\rho\ \cdot) \|_{TH^{-1/2}(\partial D)} \right ).
\end{split}
\end{equation}
Next, by Lemma~\ref{lem:bc control}, we have for $\epsilon>0$
\begin{equation}\label{eq:m2}
\begin{split}
&\|(\nu\wedge E_\rho^+)(\rho\ \cdot ) \|_{TH^{-1/2}(\partial D)}\\
\leq & C_2 \rho^{-1/2} \|\psi\|^{1/2}_{TH_{\text{Div}}^{-1/2}(\partial\Omega)} \|\nu\wedge (H_\rho-H_0)\|^{1/2}_{TH^{-1/2}_{\text{Div}}(\partial\Omega)}\\
\leq & C_2 \rho^{-1/2} \left(  \frac{\rho^{3/2}}{4\epsilon}\| \psi \|_{TH^{-1/2}_{\text{Div}}(\partial\Omega)}+ \frac{\epsilon}{\rho^{3/2}}\|\nu\wedge (H_\rho-H_0)\|_{TH_{\text{Div}}^{-1/2}(\partial\Omega)} \right ).
\end{split}
\end{equation}
From \eqref{eq:m1} and \eqref{eq:m2}, we further have
\begin{equation}\label{eq:m3}
\begin{split}
& \| \nu\wedge (H_\rho-H_0) \|_{TH^{-1/2}_{\text{Div}}(\partial\Omega)}\leq C_1\rho^3 \|\psi\|_{TH^{-1/2}_{\text{Div}}(\partial\Omega)}\\
& +\frac{1}{4} C_1 C_2 \frac{\rho^3}{\epsilon} \|\psi\|_{TH^{-1/2}_{\text{Div}}(\partial\Omega)}+C_1C_2\epsilon \|\nu\wedge (H_\rho-H_0)\|_{TH^{-1/2}_{\text{Div}}(\partial\Omega)}.
\end{split}
\end{equation}
By choosing $\epsilon$ such that $C_1C_2\epsilon<1/2$, we see immediately from \eqref{eq:m3} that
\[
\| \nu\wedge (H_\rho-H_0) \|_{TH^{-1/2}_{\text{Div}}(\partial\Omega)}\leq C\rho^3 \|\psi\|_{TH^{-1/2}_{\text{Div}}(\partial\Omega)},
\]
which completes the proof.

\end{proof}

\section{Proof of Lemma~\ref{lem:crucial}}

We present the proof of Lemma~\ref{lem:crucial} which is crucial for the proof of our main theorem.

Set
\[
\widetilde{E}_\tau=E_\tau-E_0,\qquad \widetilde{H}_\tau=H_\tau-H_0.
\]
It is straightforward to verify that
\begin{equation}\label{eq:l31}
\begin{cases}
& \nabla\wedge \widetilde{E}_\tau-i\omega \widetilde{H}_\tau=0\qquad\, \mbox{in\ \ $\Omega\backslash\overline{D}_\tau$},\\
& \nabla\wedge \widetilde{H}_\tau+i\omega \widetilde{E}_\tau=0\qquad\, \mbox{in\ \ $\Omega\backslash\overline{D}_\tau$},\\
& \nu\wedge \widetilde{E}_\tau=\varphi-\nu\wedge E_0\hspace*{0.64cm}\mbox{on\ \ $\partial D_\tau$},\\
& \nu\wedge \widetilde{E}_\tau=0\hspace*{2.27cm} \mbox{on\ \ $\partial\Omega$}.
\end{cases}
\end{equation}
Obviously, in order to show \eqref{eq:crucial}, it suffices to show
\begin{equation}\label{eq:l32}
\|\nu\wedge\widetilde{H}_\tau\|_{TH^{-1/2}_{\text{Div}}(\partial\Omega)} \leq C\left( \tau^3\|\psi\|_{TH^{-1/2}_{\text{ Div}}(\partial\Omega)} + \tau^2\|\varphi(\tau\ \cdot)\|_{TH^{-1/2}(\partial D)} \right ).
\end{equation}
In order to prove \eqref{eq:l32}, we let
\begin{equation}\label{eq:split1}
\widetilde{E}_\tau=U_\tau-\widetilde{U}_\tau\qquad\mbox{and}\qquad \widetilde{H}_\tau=V_\tau-\widetilde{V}_\tau,
\end{equation}
where $(U_\tau, V_\tau)\in H_{loc}(\nabla\wedge; \mathbb{R}^3\backslash\overline{D}_\tau)\wedge H_{loc}(\nabla\wedge; \mathbb{R}^3\backslash\overline{D}_\tau)$ are scattering solutions to
\begin{equation}\label{eq:whole}
\begin{split}
& \nabla\wedge U_\tau-i\omega V_\tau=0\qquad\, \ \ \mbox{in\ \ $\mathbb{R}^3\backslash\overline{D}_\tau$},\\
& \nabla\wedge V_\tau+i\omega U_\tau=0\qquad \ \ \, \mbox{in\ \ $\mathbb{R}^3\backslash\overline{D}_\tau$},\\
& \nu\wedge U_\tau=\varphi-\nu\wedge E_0\qquad \mbox{on\ \ $\partial D_\tau$},\\
\lim_{|x|\rightarrow+\infty} & |x|\left| (\nabla\wedge U_\tau)(x)\wedge\frac{x}{|x|}-i\omega U_\tau(x) \right|=0,
\end{split}
\end{equation}
and $(\widetilde{U}_\tau, \widetilde{V}_\tau)\in H(\nabla\wedge;\Omega\backslash\overline{D}_\tau)\wedge H(\nabla\wedge; \Omega\backslash\overline{D}_\tau)$ are solutions to
\begin{equation}\label{eq:bdomain}
\begin{split}
& \nabla\wedge \widetilde{U}_\tau-i\omega \widetilde{V}_\tau=0\qquad\ \mbox{in\ \ $\Omega\backslash\overline{D}_\tau$},\\
& \nabla\wedge\widetilde{V}_\tau+i\omega\widetilde{U}_\tau=0\qquad\ \mbox{in\ \ $\Omega\backslash\overline{D}_\tau$},\\
& \nu\wedge \widetilde{U}_\tau=\nu\wedge U_\tau\qquad\quad\ \, \mbox{on\ \ $\partial\Omega$},\\
& \nu\wedge\widetilde{U}_\tau=0\qquad\qquad\ \quad\ \, \mbox{on\ \ $\partial D_\tau$}.
\end{split}
\end{equation}

We shall show the following two lemmas, which immediately imply \eqref{eq:l32}

\begin{lem}\label{lem:s1}
Let $(U_\tau, V_\tau)\in H_{loc}(\nabla\wedge; \mathbb{R}^3\backslash\overline{D}_\tau)\wedge H_{loc}(\nabla\wedge; \mathbb{R}^3\backslash\overline{D}_\tau)$ be scattering solutions to \eqref{eq:whole}. Then there exists $\tau_0\in\mathbb{R}_+$ such that for any $\tau<\tau_0$
\begin{equation}\label{eq:s11}
\|U_\tau\|_{(C^2(\partial\Omega))^3} \leq C\left( \tau^3\|\psi\|_{TH^{-1/2}_{\text{\em Div}}(\partial\Omega)} + \tau^2\|\varphi(\tau\ \cdot)\|_{TH^{-1/2}(\partial D)} \right ),
\end{equation}
and
\begin{equation}\label{eq:s12}
\|V_\tau\|_{(C^2(\partial\Omega))^3} \leq C\left( \tau^3\|\psi\|_{TH^{-1/2}_{\text{\em Div}}(\partial\Omega)} + \tau^2\|\varphi(\tau\ \cdot)\|_{TH^{-1/2}(\partial D)} \right ),
\end{equation}
where $C$ is a positive constant dependent only on $\tau_0, \omega$ and $\Omega$, $D$, but independent of $\tau$ and $\varphi$, $\psi$.
\end{lem}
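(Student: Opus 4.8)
The plan is to represent the scattering solution $(U_\tau,V_\tau)$ in $\mathbb{R}^3\backslash\overline{D}_\tau$ by electromagnetic layer potentials with a density on $\partial D_\tau$, rescale everything to the fixed domain $\partial D$, and then track how the relevant boundary integral operators and their inverses behave as $\tau\to 0$. Concretely, I would write $U_\tau$ as a combination of the (vector) single- and double-layer potentials with wave number $\omega$ supported on $\partial D_\tau$, so that the boundary condition $\nu\wedge U_\tau=\varphi-\nu\wedge E_0$ on $\partial D_\tau$ becomes an integral equation of the second kind for the density. Performing the substitution $y=\tau x$, $x\in\partial D$, the operators on $\partial D_\tau$ turn into operators on the fixed surface $\partial D$ whose kernels are $\Phi_{\omega\tau}(x-x')=e^{i\omega\tau|x-x'|}/(4\pi|x-x'|)$; thus as $\tau\to0$ the wave-number dependence disappears and one is left, to leading order, with the static (Laplace, i.e. $\omega=0$) electromagnetic boundary operators on $\partial D$, which are invertible by classical potential theory. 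A Neumann-series / perturbation argument then gives uniform invertibility for $\tau<\tau_0$, and, crucially, it gives the density in terms of the rescaled data $\varphi(\tau\,\cdot)$ and $(\nu\wedge E_0)(\tau\,\cdot)$ with constants independent of $\tau$.

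With the density controlled, the second step is to read off the decay in $\tau$ of $U_\tau$ and $V_\tau$ evaluated on the \emph{far} surface $\partial\Omega$, where the kernel $\Phi_{\omega\tau}(x-y)$ is smooth in $y$ near $D$ and bounded below away from the singularity, so $\|U_\tau\|_{(C^2(\partial\Omega))^3}$ and $\|V_\tau\|_{(C^2(\partial\Omega))^3}$ are bounded by the total ``charge'' of the density, which behaves like $\tau^{2}$ times the density norm after the Jacobian factor $ds_y=\tau^2\,ds_x$ from the rescaling of $\partial D_\tau$. I would split the data into the part coming from $\varphi$ and the part coming from $\nu\wedge E_0$. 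For the $\varphi$-part this immediately yields the $\tau^2\|\varphi(\tau\,\cdot)\|_{TH^{-1/2}(\partial D)}$ contribution. For the $(\nu\wedge E_0)$-part one gains an extra power of $\tau$: since $E_0$ solves the free Maxwell system and $\omega$ is not an eigenvalue, $E_0$ is smooth near the origin, so on $D_\tau$ one has $(\nu\wedge E_0)(\tau\,\cdot)=O(\tau)$ in the relevant trace norm (indeed $\nu\wedge E_0$ restricted to $\partial D_\tau$ is the trace of a bounded smooth field, contributing size $O(1)$, but a careful use of the divergence-free structure and the explicit form of the static layer operators — essentially a moment cancellation, the leading constant term of $E_0$ producing no net monopole in $\nu\wedge E_0$ — upgrades this to $O(\tau)$), and combined with the $\tau^2$ from the surface measure this produces the $\tau^3\|\psi\|_{TH^{-1/2}_{\text{Div}}(\partial\Omega)}$ term, using $\|E_0\|\le C\|\psi\|$ from well-posedness of \eqref{eq:Maxwell free}.

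The main obstacle I anticipate is making the $\tau$-uniform invertibility of the rescaled boundary integral operators rigorous in the correct trace spaces $TH^{-1/2}$ / $TH^{-1/2}_{\text{Div}}(\partial D)$, together with the moment-cancellation that gives the sharp extra power of $\tau$ in the $E_0$-contribution. The $\omega=0$ electromagnetic layer operators on $\partial D$ have nontrivial kernels/cokernels related to the topology of $D$ and to the static fields (the space of ``harmonic'' tangential fields), so one has to choose the right combination of single- and double-layer operators — or add a suitable finite-rank stabilization — to obtain an invertible leading operator, and then verify that the $\tau$-dependent remainder is small \emph{in operator norm} on these spaces, not just pointwise. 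Handling the low-regularity $TH^{-1/2}$-norm for $\varphi(\tau\,\cdot)$ (as flagged in the remark after Lemma~\ref{lem:crucial}, this is weaker than the $\text{Div}$-norm one would naturally get) requires care: one must avoid ever differentiating the density along the surface, which forces a representation in which the $TH^{-1/2}$ bound suffices and which still closes as a second-kind equation. Once these functional-analytic points are settled, the decay estimates on $\partial\Omega$ follow by straightforward kernel estimates and the rescaling of surface measure.
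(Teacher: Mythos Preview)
Your approach is essentially the paper's. The paper splits the boundary data into the $\varphi$-piece and the $\nu\wedge E_0$-piece and treats them as two separate propositions; for the $\varphi$-piece it does exactly what you describe---the ansatz $U_{\tau,2}=\nabla\wedge\int_{\partial D_\tau}G(\cdot,y)\mathbf{a}(y)\,ds_y$ (only the curl of a single layer, not a combined ansatz), rescaling to $\partial D$, the kernel expansion $G_\tau=G_0+i\tau\omega/(4\pi)+\tau^2\mathscr{R}$, and Fredholm invertibility of $I+M_{\partial D}^0$ on $TH^{-1/2}(\partial D)$ via compactness plus a classical injectivity result. Your concern about working only in $TH^{-1/2}$ rather than $TH^{-1/2}_{\text{Div}}$ is resolved precisely because this second-kind equation never involves $\text{Div}\,\mathbf{a}$; the surface divergence appears only in the representation of $V_{\tau,2}$, which is evaluated on $\partial\Omega$ where the kernel is smooth. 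Your worry about topological kernels is also moot: $D$ is assumed simply connected, so $I+M_{\partial D}^0$ is genuinely invertible without finite-rank stabilization.

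The one substantive difference is the $\nu\wedge E_0$-piece. The paper does \emph{not} run your moment-cancellation argument; it simply observes that this is PEC scattering of the smooth incident field $E_0$ by the small obstacle $D_\tau$ and cites the classical low-frequency asymptotics (Dassios--Kleinman; Antoine--Pincon--Ramdani--Thierry) to obtain the $\tau^3$ bound directly. Your route---showing that the leading static density $\widetilde{\mathbf{a}}_0=2(I+M_{\partial D}^0)^{-1}(\nu\wedge E_0(0))$ has vanishing zeroth moment so the $\tau^2$ far-field term drops out---is a legitimate alternative and amounts to reproving the Rayleigh dipole behaviour by hand (it works because the corresponding static exterior solution decays like $|x|^{-3}$, forcing $\int_{\partial D}\widetilde{\mathbf{a}}_0\,ds=0$). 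The paper's route is shorter; yours is more self-contained.
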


\begin{lem}\label{lem:s2}
Let $(\widetilde{U}_\tau, \widetilde{V}_\tau)\in H(\nabla\wedge; \Omega\backslash\overline{D}_\tau)\wedge H(\nabla\wedge; \Omega\backslash\overline{D}_\tau)$ be solutions to \eqref{eq:bdomain}. Then there exists $\tau_0\in\mathbb{R}_+$ such that for any $\tau<\tau_0$
\begin{equation}\label{eq:s21}
\|\nu\wedge \widetilde{V}_\tau\|_{TH_{\text{\em Div}}^{-1/2}(\partial\Omega)} \leq C\left( \tau^3\|\psi\|_{TH^{-1/2}_{\text{\em Div}}(\partial\Omega)} + \tau^2\|\varphi(\tau\ \cdot)\|_{TH^{-1/2}(\partial D)} \right ),
\end{equation}
where $C$ is a positive constant dependent only on $\tau_0, \omega$ and $\Omega$, $D$, but independent of $\tau$ and $\varphi$, $\psi$.
\end{lem}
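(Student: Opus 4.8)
The plan is to reduce Lemma~\ref{lem:s2} to a uniform-in-$\tau$ estimate for a Maxwell transmission-type problem in the exterior of the shrinking inclusion $D_\tau$, by comparing $(\widetilde U_\tau,\widetilde V_\tau)$ with the solution on the \emph{full} domain $\Omega$ (with the small hole filled in) that carries the same tangential-electric trace on $\partial\Omega$; the smallness of that trace is supplied by Lemma~\ref{lem:s1}.

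Write $m_\tau:=\tau^3\|\psi\|_{TH^{-1/2}_{\text{Div}}(\partial\Omega)}+\tau^2\|\varphi(\tau\,\cdot)\|_{TH^{-1/2}(\partial D)}$ for the target quantity. Since $\omega$ is not an EM eigenvalue of $\Omega$, let $(\widehat U,\widehat V)\in H(\nabla\wedge;\Omega)\wedge H(\nabla\wedge;\Omega)$ be the unique solution of the free-space Maxwell system in $\Omega$ with $\nu\wedge\widehat U|_{\partial\Omega}=\nu\wedge U_\tau|_{\partial\Omega}$. By well-posedness, $\|\widehat U\|_{H(\nabla\wedge;\Omega)}+\|\nu\wedge\widehat V\|_{TH^{-1/2}_{\text{Div}}(\partial\Omega)}\le C\|\nu\wedge U_\tau\|_{TH^{-1/2}_{\text{Div}}(\partial\Omega)}$. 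Using $\mathrm{Div}(\nu\wedge U_\tau)=-i\omega\,\nu\cdot V_\tau$ on $\partial\Omega$ together with Lemma~\ref{lem:s1}, the right-hand side is $\le C\big(\|U_\tau\|_{(C^2(\partial\Omega))^3}+\|V_\tau\|_{(C^2(\partial\Omega))^3}\big)\le C m_\tau$. Moreover $\widehat U$ also solves $-\Delta\widehat U=\omega^2\widehat U$ in $\Omega$ and $\overline D_\tau\Subset\Omega$ for $\tau<\tau_0$ small, so interior elliptic estimates give $\|\widehat U\|_{(C^1(\overline D_\tau))^3}\le C m_\tau$, and after rescaling $x=\tau\xi$, $\|(\nu\wedge\widehat U)(\tau\,\cdot)\|_{TH^{-1/2}(\partial D)}\le C m_\tau$.

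Now set $P_\tau:=\widetilde U_\tau-\widehat U$, $Q_\tau:=\widetilde V_\tau-\widehat V$ on $\Omega\setminus\overline D_\tau$. Then $(P_\tau,Q_\tau)$ solves the free-space Maxwell system in $\Omega\setminus\overline D_\tau$ with $\nu\wedge P_\tau=0$ on $\partial\Omega$ and $\nu\wedge P_\tau=-\nu\wedge\widehat U=:g_\tau$ on $\partial D_\tau$, where $\|g_\tau(\tau\,\cdot)\|_{TH^{-1/2}(\partial D)}\le C m_\tau$. Since $\nu\wedge\widetilde V_\tau=\nu\wedge Q_\tau+\nu\wedge\widehat V$ on $\partial\Omega$ and $\nu\wedge\widehat V$ is already controlled by $C m_\tau$, it suffices to establish the \emph{uniform} bound
\[
\|\nu\wedge Q_\tau\|_{TH^{-1/2}_{\text{Div}}(\partial\Omega)}\le C\,\|g_\tau(\tau\,\cdot)\|_{TH^{-1/2}(\partial D)},\qquad \tau<\tau_0,
\]
with $C$ independent of $\tau$ (any fixed nonnegative power of $\tau$ on the right would do as well). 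To prove it I would represent $P_\tau$ by electric-type layer potentials built from the free-space dyadic Green's function, with densities $a$ on $\partial\Omega$ and $b$ on $\partial D_\tau$, and impose the two tangential-trace conditions to obtain a $2\times2$ block system of boundary integral equations. The diagonal block on $\partial\Omega$ is the standard Maxwell operator there and is invertible because $\omega$ is not an eigenvalue of $\Omega$; after rescaling $x=\tau\xi$ the diagonal block on $\partial D_\tau$ becomes the corresponding operator on the fixed surface $\partial D$ at wavenumber $\tau\omega$, which converges as $\tau\to0$ to the invertible exterior electrostatic operator on $\partial D$, so it is uniformly invertible for $\tau$ small. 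The two off-diagonal blocks are potentials generated on one of the two boundaries and evaluated on the other; their kernels are smooth since $\mathrm{dist}(\partial\Omega,\partial D_\tau)\ge c>0$, and together with the surface-area factor $O(\tau^2)$ of $\partial D_\tau$ this makes the off-diagonal blocks $O(\tau^2)$ in the relevant rescaled operator norms. Hence for $\tau<\tau_0$ the block system is a small perturbation of an invertible block-diagonal operator, it is uniformly invertible, and $\|a\|,\|b\|$ (in the rescaled norms) are bounded by $C\|g_\tau(\tau\,\cdot)\|_{TH^{-1/2}(\partial D)}$. Applying the corresponding smoothing magnetic trace operators on $\partial\Omega$ to $(a,b)$ gives the displayed estimate; its $\mathrm{Div}$ part follows from $\mathrm{Div}(\nu\wedge Q_\tau)=i\omega\,\nu\cdot P_\tau$ on $\partial\Omega$ together with interior regularity of $P_\tau$ near $\partial\Omega$ (which is away from $D_\tau$).

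The main obstacle is the last step: making the layer-potential machinery on the vanishing surface $\partial D_\tau$ quantitative and uniform in $\tau$ — in particular tracking every constant through the rescaling $x=\tau\xi$ and the low-wavenumber limit $\tau\omega\to0$, proving uniform invertibility of the rescaled diagonal block on $\partial D$, and establishing the $O(\tau^2)$ decay of the coupling blocks in exactly the trace spaces appearing in the statement (the source norm being the weaker $TH^{-1/2}(\partial D)$ rather than $TH^{-1/2}_{\text{Div}}(\partial D)$, the ``non-standard'' feature noted after Lemma~\ref{lem:crucial}). A softer alternative is a compactness/contradiction argument proving uniform well-posedness of the Maxwell problem in $\Omega\setminus\overline D_\tau$ directly: extending $\widetilde U_\tau$ by zero through $D_\tau$ (legitimate since $\nu\wedge\widetilde U_\tau=0$ on $\partial D_\tau$, so no surface current appears in $\nabla\wedge\widehat U$), extracting a weak $H(\nabla\wedge;\Omega)$-limit along a putative bad sequence $\tau_j\to0$, and using that a distribution supported at the single point $0$ and lying in $H^{-1}(\mathbb{R}^3)$ must vanish to conclude the limit solves the homogeneous problem on all of $\Omega$, hence is zero; the difficulty there is upgrading weak to strong convergence, which is not automatic because $H(\nabla\wedge;\Omega)$ does not embed compactly in $(L^2(\Omega))^3$ and a Helmholtz-type splitting is needed. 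I expect the layer-potential version to be the one carried out.
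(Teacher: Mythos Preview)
Your layer-potential strategy is essentially the paper's: set up a $2\times 2$ block boundary integral system on $\partial\Omega$ and $\partial D_\tau$, rescale the $\partial D_\tau$ block to the fixed surface $\partial D$ at wavenumber $\tau\omega\to 0$, and invert uniformly for small $\tau$. The paper differs in two implementation choices. First, it skips your preliminary subtraction of $(\widehat U,\widehat V)$ and instead applies the Stratton--Chu representation directly to $\widetilde V_\tau$, taking the unknowns to be the \emph{traces} $\mathbf{a}_1=\nu\wedge\widetilde V_\tau|_{\partial\Omega}$ and $\mathbf{a}_2=\nu\wedge\widetilde V_\tau|_{\partial D_\tau}$ rather than ansatz densities; the data $P_1,P_2$ then come from the electric dipole operator acting on $\nu\wedge U_\tau|_{\partial\Omega}$ and are bounded by $C\|U_\tau\|_{(C^2(\partial\Omega))^3}\le Cm_\tau$ via Lemma~\ref{lem:s1}. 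Second, the paper works throughout in the classical H\"older spaces $T_d^{0,\alpha}$ rather than Sobolev traces, which sidesteps some of the bookkeeping you flag at the end.

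One point in your sketch needs correction: the $(2,1)$ off-diagonal block (density on $\partial\Omega$, evaluated on $\partial D_\tau$) does \emph{not} carry the $O(\tau^2)$ surface-area factor, since the integration is over the fixed surface $\partial\Omega$; the paper's estimate \eqref{eq:oper asymp 2} shows it is merely bounded. The paper handles this by placing that block into the \emph{lower-triangular} principal part $\mathbf{L}$ (invertible because its diagonal blocks $I+\mathbf{M}_{\partial\Omega}$ and $I-\mathbf{M}^0_{\partial D}$ are), with only the $(1,2)$ block and the $O(\tau^2)$ remainder on $\partial D$ going into the small perturbation $\mathbf{R}$. In your setup the same fix works, and is in fact simpler, because your data on $\partial\Omega$ vanishes: the first equation forces $a=O(\tau^2)\|b\|$, so the bounded $(2,1)$ block applied to $a$ is $O(\tau^2)$ after all.
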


We next present the proof of Lemma~\ref{lem:s1}, which may be further divided into the following two propositions.

\begin{prop}\label{lem:s11}
Let $(U_{\tau,1}, V_{\tau, 1})\in H_{loc}(\nabla\wedge; \mathbb{R}^3\backslash\overline{D}_\tau)\wedge H_{loc}(\nabla\wedge; \mathbb{R}^3\backslash\overline{D}_\tau)$ be solutions to
\begin{equation}\label{eq:whole 1}
\begin{split}
& \nabla\wedge U_{\tau,1}-i\omega V_{\tau,1}=0\qquad\, \ \ \mbox{in\ \ $\mathbb{R}^3\backslash\overline{D}_\tau$},\\
& \nabla\wedge V_{\tau,1}+i\omega U_{\tau,1}=0\qquad \ \ \, \mbox{in\ \ $\mathbb{R}^3\backslash\overline{D}_\tau$},\\
& \nu\wedge U_{\tau,1}=\nu\wedge E_0\hspace*{1.8cm} \mbox{on\ \ $\partial D_\tau$},\\
\lim_{|x|\rightarrow+\infty} & |x|\left| (\nabla\wedge U_{\tau,1})(x)\wedge\frac{x}{|x|}-i\omega U_{\tau,1}(x) \right|=0,
\end{split}
\end{equation}
Then there exists $\tau_0\in\mathbb{R}_+$ such that for any $\tau<\tau_0$
\begin{equation}\label{eq:s111}
\|U_{\tau,1}\|_{(C^2(\partial\Omega))^3} \leq C \tau^3\|\psi\|_{TH^{-1/2}_{\text{\em Div}}(\partial\Omega)}
\end{equation}
and
\begin{equation}\label{eq:s122}
\|V_{\tau,1}\|_{(C^2(\partial\Omega))^3} \leq C \tau^3\|\psi\|_{TH^{-1/2}_{\text{\em Div}}(\partial\Omega)} ,
\end{equation}
where $C$ is a positive constant dependent only on $\tau_0, \omega$ and $\Omega$, $D$, but independent of $\tau$ and $\varphi$, $\psi$.
\end{prop}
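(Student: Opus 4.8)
The plan is to prove Proposition~\ref{lem:s11} via an integral representation of the radiating field $(U_{\tau,1},V_{\tau,1})$ solving \eqref{eq:whole 1}, together with a careful bookkeeping of the low-order moments of an induced surface current on $\partial D_\tau$. The decisive point --- and where it matters that $\nu\wedge E_0$ is the tangential trace of a \emph{genuine} Maxwell field, not merely a fixed smooth field --- is that this bookkeeping gains one power of $\tau$ beyond the naive small-inclusion estimate. As input, since $\omega$ is not an EM eigenvalue, $(E_0,H_0)$ is the unique solution of \eqref{eq:Maxwell free} and, by interior estimates for the componentwise Helmholtz system, is real-analytic on a fixed ball $B\ni 0$ with $\overline B\Subset\Omega$, with $\|E_0\|_{C^k(\overline B)}+\|H_0\|_{C^k(\overline B)}\le C_k\|\psi\|_{TH^{-1/2}_{\text{Div}}(\partial\Omega)}$ for all $k$; I fix $\tau_0$ small enough that $\overline{D_\tau}\Subset B$ for $\tau<\tau_0$.

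First I would glue the two fields across $\partial D_\tau$: set $\mathcal E:=U_{\tau,1}$ in $\mathbb R^3\setminus\overline{D_\tau}$, $\mathcal E:=E_0$ in $D_\tau$, and define $\mathcal H$ analogously from $V_{\tau,1}$ and $H_0$. Because $\nu\wedge U_{\tau,1}=\nu\wedge E_0$ on $\partial D_\tau$, the tangential trace of $\mathcal E$ is continuous, so $\nabla\wedge\mathcal E=i\omega\mathcal H$ on all of $\mathbb R^3$ with no surface term; only the tangential trace of $\mathcal H$ jumps, by the tangential surface current $\mathbf j_\tau:=\nu\wedge(V_{\tau,1}-H_0)|_{\partial D_\tau}$. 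Hence $(\nabla\wedge\nabla\wedge-\omega^2)\mathcal E=i\omega\,\mathbf j_\tau\,\delta_{\partial D_\tau}$ with $\mathcal E$ radiating, so that for $x\notin\partial D_\tau$, with $\Phi_\omega$ the outgoing Helmholtz fundamental solution,
\[
U_{\tau,1}(x)=i\omega\int_{\partial D_\tau}\Phi_\omega(x,y)\,\mathbf j_\tau(y)\,ds_y+\frac{i}{\omega}\,\nabla_x\!\int_{\partial D_\tau}\nabla_x\Phi_\omega(x,y)\cdot\mathbf j_\tau(y)\,ds_y .
\]

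Rescaling $y=\tau\hat y$ turns $\hat U_{\tau,1}(\hat x):=U_{\tau,1}(\tau\hat x)$, $\hat V_{\tau,1}(\hat x):=V_{\tau,1}(\tau\hat x)$ into radiating solutions of the exterior Maxwell system in $\mathbb R^3\setminus\overline D$ at frequency $\hat\omega=\tau\omega$ with $\nu\wedge\hat U_{\tau,1}=(\nu\wedge E_0)(\tau\,\cdot)$ on $\partial D$. Using well-posedness of this exterior problem, uniform as $\hat\omega\downarrow 0$ (the limiting static problem being well posed), I get $\|\hat U_{\tau,1}\|+\|\hat V_{\tau,1}\|\lesssim\|(\nu\wedge E_0)(\tau\,\cdot)\|_{TH^{-1/2}_{\text{Div}}(\partial D)}\lesssim\|E_0\|_{C^1(\overline B)}\lesssim\|\psi\|$ on $\partial D$ and on any fixed exterior ball, so $\hat{\mathbf j}_\tau:=\mathbf j_\tau(\tau\,\cdot)$ is $O(1)$ in the relevant norm. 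For $x\in\partial\Omega$ the kernels $\Phi_\omega(x,\cdot)$, $\nabla_x\Phi_\omega(x,\cdot)$, $\nabla_x\nabla_x\Phi_\omega(x,\cdot)$ and their $x$-derivatives are smooth and uniformly bounded on $\partial D_\tau$; Taylor expanding in $y$ about $0$, the coefficient of order $k$ carries a moment of size $O(\tau^{k+2})$, so every term with $k\ge 1$ is $O(\tau^3)$, and the only obstruction to an $O(\tau^3)$ bound is the term built from the zeroth moment $\mathbf m_0:=\int_{\partial D_\tau}\mathbf j_\tau\,ds$, which is a priori only $O(\tau^2)$.

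The crux is therefore to improve $\mathbf m_0$ to $O(\tau^3)$. Write $\mathbf m_0=\int_{\partial D_\tau}\nu\wedge V_{\tau,1}\,ds-\int_{\partial D_\tau}\nu\wedge H_0\,ds$. For the second term, $\int_{\partial D_\tau}\nu\wedge H_0\,ds=\int_{D_\tau}\nabla\wedge H_0\,dx=-i\omega\int_{D_\tau}E_0\,dx=O(\tau^3)$ because $|D_\tau|=\tau^3|D|$ --- this is exactly where the Maxwell structure of the data enters. For the first term I would use the low-frequency expansion $\hat U_{\tau,1}=\hat U^{(0)}+\hat\omega\hat U^{(1)}+O(\hat\omega^2)$, where $\hat U^{(0)}=\nabla\phi$ is the curl-free electrostatic limit, so $\hat V_{\tau,1}=\tfrac1{i\hat\omega}\nabla\wedge\hat U_{\tau,1}=\tfrac1i\nabla\wedge\hat U^{(1)}+O(\hat\omega)$ on $\partial D$; moreover $\hat U^{(1)}$ is divergence-free, componentwise harmonic and decaying in the exterior, hence, being divergence-free, it has no $1/|\hat x|$ part and decays like $|\hat x|^{-2}$. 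The pointwise identity $[\nu\wedge(\nabla\wedge\mathbf W)]_j=\nu\cdot\partial_j\mathbf W-\partial_\nu W_j$ and the exterior divergence theorem --- whose boundary terms at infinity vanish by this decay --- give $\int_{\partial D}\nu\wedge(\nabla\wedge\hat U^{(1)})\,ds=0$, so $\int_{\partial D}\nu\wedge\hat V_{\tau,1}\,ds=O(\hat\omega)=O(\tau)$ and $\int_{\partial D_\tau}\nu\wedge V_{\tau,1}\,ds=\tau^2\!\int_{\partial D}\nu\wedge\hat V_{\tau,1}\,ds=O(\tau^3)$. Hence $\mathbf m_0=O(\tau^3)$, the representation gives $|U_{\tau,1}(x)|\lesssim\tau^3\|\psi\|$ on $\partial\Omega$ and, differentiating under the integral sign, $\|U_{\tau,1}\|_{(C^3(\partial\Omega))^3}\lesssim\tau^3\|\psi\|$; finally $V_{\tau,1}=\tfrac1{i\omega}\nabla\wedge U_{\tau,1}$ in $\mathbb R^3\setminus\overline{D_\tau}$ yields $\|V_{\tau,1}\|_{(C^2(\partial\Omega))^3}\lesssim\tau^3\|\psi\|$, i.e.\ \eqref{eq:s111}--\eqref{eq:s122}. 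The main obstacle is precisely this improvement from $O(\tau^2)$ to $O(\tau^3)$: it does not follow from generic small-inclusion scattering bounds, but forces one to exploit both the volume factor $\tau^3$ in $\int_{D_\tau}E_0$, $\int_{D_\tau}H_0$ and the low-frequency ``curl has vanishing mean tangential trace'' cancellation for the exterior field.
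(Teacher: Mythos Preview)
Your argument is correct and is, in substance, the low-frequency/small-obstacle asymptotics that the paper invokes; the paper's own proof consists only of the interior estimate $\|E_0\|_{C^1(\overline D)}+\|H_0\|_{C^1(\overline D)}\le C\|\psi\|_{TH^{-1/2}_{\text{Div}}(\partial\Omega)}$ together with a reference to Corollary~3.2 of \cite{APRT} (itself based on \cite{Kle}) for the $\tau^3$ bound on the scattered field of a small perfect conductor. What you have written is a self-contained derivation of exactly that cited result: integral representation via the induced surface current $\mathbf j_\tau$, Taylor expansion of the kernel, and the crucial observation that the zeroth moment $\int_{\partial D_\tau}\mathbf j_\tau$ is $O(\tau^3)$ rather than the naive $O(\tau^2)$. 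Your identification of where the Maxwell structure of $E_0$ enters (the volume identity $\int_{\partial D_\tau}\nu\wedge H_0=-i\omega\int_{D_\tau}E_0=O(\tau^3)$, and the curl-cancellation for $\hat V^{(0)}$) is precisely the mechanism behind the polarizability-tensor picture that \cite{APRT} uses. So the two routes coincide; yours is explicit where the paper defers to the literature.

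One technical caveat worth tightening: you assert that $\hat U^{(1)}$ decays at infinity, hence (being divergence-free and harmonic) like $|\hat x|^{-2}$. In the formal low-frequency expansion of a radiating solution the individual terms need not decay (recall $e^{i\hat\omega r}/(4\pi r)=1/(4\pi r)+i\hat\omega/(4\pi)-\hat\omega^2 r/(8\pi)+\cdots$), so this step requires the careful matching carried out in \cite{Kle}. A cleaner way to phrase the same cancellation, avoiding $\hat U^{(1)}$ altogether, is to work directly with $\hat V^{(0)}$: it is curl-free and divergence-free in the exterior and (by the low-frequency theory) decaying, hence harmonic and $O(|\hat x|^{-2})$; then $\int_{\partial D}\nu\wedge\hat V^{(0)}\,ds$ equals the integral of $\nabla\wedge\hat V^{(0)}=0$ over the exterior, with the boundary term at infinity vanishing by the $|\hat x|^{-2}$ decay. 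Either way the conclusion stands.
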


\begin{prop}\label{lem:s12}
Let $(U_{\tau,2}, V_{\tau, 2})\in H_{loc}(\nabla\wedge; \mathbb{R}^3\backslash\overline{D}_\tau)\wedge H_{loc}(\nabla\wedge; \mathbb{R}^3\backslash\overline{D}_\tau)$ be solutions to
\begin{equation}\label{eq:whole 1}
\begin{split}
& \nabla\wedge U_{\tau,2}-i\omega V_{\tau,2}=0\qquad\, \ \ \mbox{in\ \ $\mathbb{R}^3\backslash\overline{D}_\tau$},\\
& \nabla\wedge V_{\tau,2}+i\omega U_{\tau,2}=0\qquad \ \ \, \mbox{in\ \ $\mathbb{R}^3\backslash\overline{D}_\tau$},\\
& \nu\wedge U_{\tau,2}=\varphi \hspace*{2.6cm} \mbox{on\ \ $\partial D_\tau$},\\
\lim_{|x|\rightarrow+\infty} & |x|\left| (\nabla\wedge U_{\tau,2})(x)\wedge\frac{x}{|x|}-i\omega U_{\tau,2}(x) \right|=0,
\end{split}
\end{equation}
Then there exists $\tau_0\in\mathbb{R}_+$ such that for any $\tau<\tau_0$
\begin{equation}\label{eq:s211}
\|U_{\tau,2}\|_{(C^2(\partial\Omega))^3} \leq C \tau^2\|\varphi(\tau\ \cdot)\|_{TH^{-1/2}(\partial D)}
\end{equation}
and
\begin{equation}\label{eq:s222}
\|V_{\tau,2}\|_{(C^2(\partial\Omega))^3} \leq C \tau^2\|\varphi(\tau\ \cdot)\|_{TH^{-1/2}(\partial D)} ,
\end{equation}
where $C$ is a positive constant dependent only on $\tau_0, \omega$ and $\Omega$, $D$, but independent of $\tau$ and $\varphi$, $\psi$.
\end{prop}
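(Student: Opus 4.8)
The plan is to prove Proposition~\ref{lem:s12} by representing the exterior scattering solution $(U_{\tau,2},V_{\tau,2})$ through electromagnetic layer potentials on $\partial D_\tau$ and then rescaling everything to the fixed domain $D$. First I would write $U_{\tau,2} = \nabla\wedge\int_{\partial D_\tau}\Phi_\omega(x,y)\,a(y)\,ds_y + \frac{1}{i\omega}\nabla\wedge\nabla\wedge\int_{\partial D_\tau}\Phi_\omega(x,y)\,b(y)\,ds_y$ (or a single magnetic-dipole-type potential adapted to the scattering condition), where $\Phi_\omega$ is the outgoing Helmholtz fundamental solution. Imposing the boundary condition $\nu\wedge U_{\tau,2}=\varphi$ on $\partial D_\tau$ gives a boundary integral equation for the density; the key point is to push this equation to the unit-scale boundary $\partial D$ by the change of variables $y=\tau\eta$, $x=\tau\xi$. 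Under this scaling the single-layer kernel $\Phi_\omega(\tau\xi,\tau\eta)$ behaves like $\tau^{-1}\Phi_0(\xi,\eta)+O(\tau^0)$ with $\Phi_0(\xi,\eta)=\frac{1}{4\pi|\xi-\eta|}$, and the surface measure picks up a factor $\tau^2$, so the rescaled integral operator is, to leading order, $\tau$ times the $\omega=0$ operator on $\partial D$, which is invertible on the appropriate trace space (here one uses that $D$ is fixed and smooth, and that $\omega$ is not a resonance in the small-$\tau$ regime). Solving for the density and tracking the powers of $\tau$, one finds $\|a(\tau\,\cdot)\|,\|b(\tau\,\cdot)\| \lesssim \tau^{-1}\|\varphi(\tau\,\cdot)\|_{TH^{-1/2}(\partial D)}$ in the relevant norm.

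Next I would evaluate $U_{\tau,2}$ and $V_{\tau,2}=\frac{1}{i\omega}\nabla\wedge U_{\tau,2}$ on the fixed surface $\partial\Omega$, which sits at a fixed positive distance from $\overline{D}_\tau$ once $\tau<\tau_0$. Because the kernel $\Phi_\omega(x,y)$ and all its derivatives are smooth and uniformly bounded for $x\in\partial\Omega$, $y\in\partial D_\tau$, the map from the density to the $(C^2(\partial\Omega))^3$-values of the potential is bounded with norm $O(|\partial D_\tau|)=O(\tau^2)$ coming from the surface measure — here the $C^2$-estimate on $\partial\Omega$ is immediate from differentiating under the integral sign, since the singularity never touches $\partial\Omega$. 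Combining the $O(\tau^2)$ from integrating over $\partial D_\tau$ with the $O(\tau^{-1})\cdot$(\,rescaled density bound\,), and using the rescaling identity $\|\varphi\|$ on $\partial D_\tau$ versus $\|\varphi(\tau\,\cdot)\|$ on $\partial D$ (which inserts the remaining powers of $\tau$ so that the net exponent is $2$), yields \eqref{eq:s211} and \eqref{eq:s222}. The same argument handles $V_{\tau,2}$ since $\nabla\wedge$ applied to the potential only changes the (smooth, bounded) kernel.

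The main obstacle I anticipate is the careful bookkeeping of Sobolev norms under the dilation $x\mapsto\tau x$ on the \emph{surface} $\partial D_\tau$, together with establishing uniform invertibility of the rescaled boundary integral operator for all $\tau<\tau_0$. The norm $TH^{-1/2}(\partial D_\tau)$ scales with a nontrivial power of $\tau$, and one must be scrupulous about whether the $-1/2$-order norm on the small sphere is being measured intrinsically or pulled back; mismatches here are exactly what produce the ``non-standard'' feature flagged after Lemma~\ref{lem:crucial}. Moreover, because the electromagnetic layer operator on $\partial D_\tau$ is a compact perturbation of the leading-order $\tau\cdot(\omega=0)$ operator, one needs a Neumann-series / Fredholm argument that is uniform in $\tau$: the perturbation has small operator norm (of order $\tau^2$ relative to the leading term, or at least $o(1)$), so invertibility of the limiting operator on $\partial D$ — which holds because $D$ is simply connected and smooth — transfers to all sufficiently small $\tau$. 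Once that uniform invertibility is in hand, the rest is the routine power counting sketched above. Proposition~\ref{lem:s11} is then the analogous but easier computation: the boundary datum is $\nu\wedge E_0$ on $\partial D_\tau$, and since $E_0$ is a \emph{smooth} solution of the free Maxwell system near the origin, its tangential trace on $\partial D_\tau$ is $O(1)$ in $C^k$ but, crucially, after subtracting the value $E_0(0)$ (which contributes a lower-order multipole that one checks produces the optimal $\tau^3$) one gains the extra power, so the same layer-potential machinery delivers the $\tau^3\|\psi\|$ bound via the well-posedness estimate $\|E_0\|_{C^k(\overline{D_{\tau_0}})}\le C\|\psi\|_{TH^{-1/2}_{\text{Div}}(\partial\Omega)}$.
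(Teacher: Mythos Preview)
Your strategy is exactly the paper's: represent $U_{\tau,2}$ by a vector layer potential on $\partial D_\tau$, rescale the resulting boundary integral equation to $\partial D$, invoke invertibility of the limiting ($\omega=0$) operator via Fredholm theory, and then read off the $\tau^2$ from the surface measure when evaluating on the distant surface $\partial\Omega$. The paper uses only the single magnetic-dipole ansatz $U_{\tau,2}(x)=\nabla_x\wedge\int_{\partial D_\tau}G(x,y)\,\mathbf{a}(y)\,ds_y$; your second density $b$ is unnecessary for this exterior problem.

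There is, however, a concrete error in your scaling bookkeeping. The boundary operator that arises,
\[
(\mathbf{M}_{\partial D_\tau}\mathbf{a})(x)=2\int_{\partial D_\tau}\nu(x)\wedge\bigl[\nabla_xG(x,y)\wedge\mathbf{a}(y)\bigr]\,ds_y,
\]
is \emph{scale-invariant}: under $x=\tau x'$, $y=\tau y'$ one has $\nabla_xG(x,y)\sim\tau^{-2}\nabla_{x'}G_0(x',y')$ while $ds_y=\tau^2\,ds_{y'}$, so the powers cancel and $(\mathbf{M}_{\partial D_\tau}\mathbf{a})(\tau x')=(\mathbf{M}^0_{\partial D}+\mathcal{R})\,\widetilde{\mathbf{a}}(x')$ with $\|\mathcal{R}\|=O(\tau^2)$ and \emph{no} overall factor of $\tau$. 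The rescaled equation is therefore $(I+\mathbf{M}^0_{\partial D}+\mathcal{R})\widetilde{\mathbf{a}}=2\varphi(\tau\,\cdot)$, and invertibility of $I+\mathbf{M}^0_{\partial D}$ on $TH^{-1/2}(\partial D)$ (this is the genuinely nontrivial step, handled by Fredholm theory together with the classical injectivity result in Colton--Kress) yields
\[
\|\widetilde{\mathbf{a}}\|_{TH^{-1/2}(\partial D)}\le C\,\|\varphi(\tau\,\cdot)\|_{TH^{-1/2}(\partial D)},
\]
not your $\tau^{-1}$ bound. The $\tau^2$ in \eqref{eq:s211}--\eqref{eq:s222} then comes \emph{entirely} from the surface measure $ds_y=\tau^2\,ds_{y'}$ when the potential is evaluated on $\partial\Omega$, paired against $\widetilde{\mathbf{a}}\in TH^{-1/2}(\partial D)$ via the smooth kernel $y'\mapsto G(x,\tau y')$; no further ``insertion of remaining powers'' is needed. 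Your final exponent is correct, but only because two errors in the intermediate counting happen to compensate; the hand-wave you use to reconcile them would not survive a careful write-up. Fix the scaling of $\mathbf{M}$ and the argument goes through cleanly.
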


\begin{proof}[Proof of Proposition~\ref{lem:s11}]

We first note that the solutions $E_0$ and $H_0$ to \eqref{eq:Maxwell free} are smooth inside $\Omega$, and also by the local regularity estimate we have
\begin{equation}\label{eq:loca estimate 1}
\|E_0\|_{(C^1(\overline{D}))^3}\leq C \| \psi \|_{TH^{-1/2}_{\text{Div}}(\partial\Omega)}\ \mbox{and}\ \|H_0\|_{(C^1(\overline{D}))^3}\leq C \| \psi \|_{TH^{-1/2}_{\text{Div}}(\partial\Omega)},
\end{equation}
where $C$ is constant depending only on $\Omega, D$ and $\omega$. Since $\nu\wedge E_0$ is smooth on $\partial D_\tau$, we know both $U_{\tau,1}$ and $V_{\tau,1}$ are strong solutions which belong to $(C^1(\mathbb{R}^3\backslash\overline{D}_\tau)\cap C^{0,\alpha}(\mathbb{R}^3\backslash D_\tau))^3$ with $0<\alpha<1$ (see \cite{ColKre1, ColKre}). By a completely similar argument to the proof of Corollary 3.2 in \cite{APRT}, which is based on the low frequency asymptotics in \cite{Kle}, one can show \eqref{eq:s111} and \eqref{eq:s122}.

\end{proof}

\begin{proof}[Proof of Proposition~\ref{lem:s12}]
We make use of the layer potential technique to show the lemma. To that end, we let
\[
G(x,y):=\frac{1}{4\pi}\frac{e^{i\omega |x-y|}}{|x-y|}\quad \mbox{and}\quad G_0(x,y):=\frac{1}{4\pi}\frac{1}{|x-y|},\ x, y\in\mathbb{R}^3; \ x\neq y.
\]
Furthermore, we introduce the following vector boundary layer potential operators $\mathbf{M}_{\Gamma}$ and $\mathbf{M}_{\Gamma}^0$,
\begin{equation}\label{eq:linear operator}
(\mathbf{M}_\Gamma \mathbf{a})(x):=2\int_\Gamma \nu(x)\wedge [\nabla_x\wedge(\mathbf{a}(y) G(x,y)]\ ds_y,\quad x\in\Gamma
\end{equation}
and
\begin{equation}\label{eq:linear operator zero}
(\mathbf{M}^0_\Gamma \mathbf{a})(x):=2\int_\Gamma \nu(x)\wedge [\nabla_x\wedge(\mathbf{a}(y) G_0(x,y)]\ ds_y,\quad x\in\Gamma,
\end{equation}
where $\mathbf{a}$ is a tangential vector field on $\Gamma$. We refer to \cite{ColKre1,Mcl,Ned,Tay} for related mapping properties of the above introduced operators.

We make use of the following {\it ansatz} of the EM fields to \eqref{eq:whole 1},
\begin{align}
U_{\tau,2}(x)=&\nabla_x\wedge\int_{\partial D_\tau} G(x,y)\mathbf{a}(y)\ ds_y,\ \ x\in\mathbb{R}^3\backslash\overline{D}_\tau,\label{eq:U2}\\
V_{\tau,2}(x)=&\frac{1}{i\omega}\nabla_x\wedge U_{\tau,2}(x)
=\frac\omega i\int_{\partial D_\tau} G(x,y)\mathbf{a}(y)\ ds_y\nonumber\\
&+\frac{1}{i\omega}\int_{\partial D_\tau} \nabla_x G(x,y) \text{Div}\, \mathbf{a}(y)\ ds_y,\ \ x\in\mathbb{R}^3\backslash\overline{D}_\tau,\label{eq:V2}
\end{align}
where $\mathbf{a}\in TH_{\text{Div}}^{-1/2}(\partial D_\tau)$. One needs first show the uniform well-posedness of the Maxwell equations \eqref{eq:whole 1}. That is, for any fixed $\omega\in\mathbb{R}_+$, there exists $\tau_0\in\mathbb{R}_+$ sufficiently small such that when $\tau<\tau_0$, there exists a unique $\mathbf{a}\in TH_{\text{Div}}^{-1/2}(\partial D_\tau)$ such that $U_{\tau,2}$ and $V_{\tau,2}$ given in \eqref{eq:U2} and \eqref{eq:V2} are solutions to \eqref{eq:whole 1}. However, we shall not show this in the sequel, and instead we shall directly estimate $\mathbf{a}$ assuming its existence which will then give the desired estimates \eqref{eq:s211} and \eqref{eq:s222}. Nevertheless, we emphasize that it can be directly seen from our estimating of $\mathbf{a}$ the uniform well-posedness of \eqref{eq:whole 1}, or equivalently the unique existence of $\mathbf{a}\in TH_{\text{Div}}^{-1/2}(\partial D_\tau)$.

Next, by letting $x$ approach $\partial D_\tau^+$, and using the mapping properties of $\mathbf{M}_{\partial D_\tau}$ and the jump properties of the vector potential operator $\mathbf{M}_{\partial D_\tau}$, one has
\begin{equation}\label{eq:integral equation 1}
\mathbf{a}(x)+[\mathbf{M}_{\partial D_\tau}\mathbf{a}](x)=2\varphi(x),\quad x\in\partial D_\tau.
\end{equation}
We claim that for $\tau$ sufficiently small
\begin{equation}\label{eq:claim1}
\|\mathbf{a}(\tau\ \cdot)\|_{TH^{-1/2}(\partial D)}\leq C \|\varphi(\tau\ \cdot)\|_{TH^{-1/2}(\partial D)},
\end{equation}
where $C$ is a generic constant independent of $\tau$ and $\varphi$. To that end, we let
\[
\widetilde{\mathbf{a}}(x')=:\mathbf{a}(\tau x'), \quad  x':=\frac x \tau\in\partial D,\ \ x\in\partial D_\tau.
\]
By using change of variables,
\begin{equation}\label{eq:change variable}
\mbox{ $x'=x/\tau$ and $y'=y/\tau$ for $x, y\in\partial D_\tau$,}
\end{equation}
one can show
\begin{equation}\label{eq:change1}
\begin{split}
& (\mathbf{M}_{\partial D_\tau}\mathbf{a})(x)=(\mathbf{M}_{\partial D_\tau}\mathbf{a})(\tau x')\\
=& 2 \nu(x')\wedge \nabla_{x'}\wedge\int_{\partial D} G_{\tau}(x',y')\widetilde{\mathbf{a}}(y')\ ds_{y'},
\end{split}
\end{equation}
where
\[
G_{\tau}(x',y')=\frac{1}{4\pi}\frac{e^{i\tau\omega|x'-y'|}}{|x'-y'|}=G_0(x',y')+\frac{i\tau\omega}{4\pi}+\tau^2\mathscr{R}(x',y').
\]
It is easily verified that the remainder term $\mathscr{R}(x',y')$ satisfies
\begin{equation}\label{eq:remainder}
|\mathscr{R}(x',y')|=\mathcal{O}(|x'-y'|),\quad x', y'\in\partial D.
\end{equation}
Hence, we have the following splitting
\begin{equation}\label{eq:splitting}
\begin{split}
& 2\nu(x')\wedge\nabla_{x'}\wedge\int_{\partial D} G_{\tau}(x',y')\widetilde{\mathbf{a}}(y')\ ds_{y'}\\
=& 2 \nu(x')\wedge\nabla_{x'}\wedge \int_{\partial D} G_0(x',y')\widetilde{\mathbf{a}}(y')\ ds_y'\\
&+2\tau^2\nu(x')\wedge\nabla_{x'}\wedge\int_{\partial D} \mathscr{R}(x',y')\widetilde{\mathbf{a}}(y')\ ds_{y'}\\
=& (\mathbf{M}_{\partial D}^0\widetilde{\mathbf{a}})(x')+(\mathcal{R}\widetilde{\mathbf{a}})(x').
\end{split}
\end{equation}
By using the mapping properties of layer potential operators in \cite{Ned}, it is straightforward to show that
\begin{equation}\label{eq:remainder 2}
\begin{split}
& \|\mathcal{R}\widetilde{\mathbf{a}}(\cdot)\|_{TH^{-1/2}(\partial D)}\\
\leq & C \tau^2 \|\widetilde{\mathbf{a}}(\cdot)\|_{TH^{-1/2}(\partial D)}=
C\tau^2\|\mathbf{a}(\tau\ \cdot)\|_{TH^{-1/2}(\partial D)},
\end{split}
\end{equation}
where $C$ is a generic constant depending only on $D$ and $\omega$.
Then, using again the change of variables in \eqref{eq:change variable} to the integral equation
\eqref{eq:integral equation 1}, and the splitting \eqref{eq:splitting}, one has by direct calculations that
\begin{equation}\label{eq:change 3}
[I+M_{\partial D}^0+\mathcal{R}]\widetilde{\mathbf{a}}(x')=2\varphi(\tau x'),\quad x'\in\partial D.
\end{equation}
Next, we shall show
\begin{equation}\label{eq:invertible}
\mbox{$I+M_{\partial D}^0$ is invertible from $TH^{-1/2}(\partial D)$ to $TH^{-1/2}(\partial D)$},
\end{equation}
which together with \eqref{eq:remainder} and \eqref{eq:change 3} that
\begin{equation}\label{eq:invertible 2}
\widetilde{\mathbf{a}}(x')=2[(I+M_{\partial D}^0)^{-1}+\mathcal{O}(\tau^2)](\varphi(\tau\ \cdot))(x'),
\end{equation}
thus proving the claim in \eqref{eq:claim1}. In order to show \eqref{eq:invertible}, we first note that $M_{\partial D}^0$
is an integral operator of order $-1$, i.e., it is continuous from $TH^s(\partial D)$ to $TH^{s+1}(\partial D)$ (see \cite{Ned}, pp. 242). Hence, by the Reisz-Fredholm theory, it is sufficient to show that
\begin{equation}\label{eq:Fred}
(I+M_{\partial D}^0)\mathbf{b}=0,\quad \mathbf{b}\in TH^{-1/2}(\partial D)
\end{equation}
has only trivial solution, i.e., $\mathbf{b}=0$. By using the fact that $M_{\partial D}^0$ is of degree $-1$, we see that the spectrum of $M_{\partial D}^0$ is the same in $TH^{-1/2}(\partial D)$ and $C(\partial D)$. Then by Theorem~5.4 in \cite{ColKre1}, one must have $\mathbf{b}=0$ in \eqref{eq:Fred}, which readily proves \eqref{eq:invertible}.

Finally, using \eqref{eq:claim1} and the integral representations \eqref{eq:U2} and \eqref{eq:V2}, it is by direct calculations to show \eqref{eq:s211} and \eqref{eq:s222}.

The proof is completed.
\end{proof}

\begin{proof}[Proof of Lemma~\ref{lem:s2}]
Since $U_\tau$ is smooth on $\partial \Omega$, we know both $\widetilde{U}_{\tau}$ and $\widetilde{V}_{\tau}$ are strong solutions which belong to $(C^1(\Omega\backslash\overline{D}_\tau)\cap C^{0,\alpha}(\overline{\Omega}\backslash D_\tau))^3$ with $0<\alpha<1$ (see \cite{ColKre1, ColKre}). Hence, in the sequel, we shall work within the classic setting. To that end, we introduce $T(\Gamma)$, the spaces of all continuous tangential fields $\mathbf{a}$ equipped with the supremum norm, and $T^{0,\alpha}(\Gamma)$, the space of all H\"older continuous tangential fields equipped with the usual H\"older norm. We also need to introduce normed spaces of tangential fields possessing a surface divergence by
\[
T_d(\Gamma):=\{\mathbf{a}\in T(\Gamma)| \text{Div}\, \mathbf{a}\in C(\Gamma)\}
\]
and
\[
T_d^{0,\alpha}(\Gamma):=\{\mathbf{a}\in T^{0,\alpha}| \text{Div}\, \mathbf{a}\in C^{0,\alpha}(\Gamma)\}
\]
equipped with the norms
\[
\|\mathbf{a}\|_{T_d}:=\|\mathbf{a}\|_\infty+\|\text{Div}\, \mathbf{a}\|_\infty,\quad \|a\|_{T_d^{0,\alpha}}:=\|a\|_{0,\alpha}+\|\text{Div}\, \mathbf{a}\|_{0,\alpha}.
\]
We again employ the boundary layer potential operators introduced in \eqref{eq:linear operator} and \eqref{eq:linear operator zero}, and refer to \cite{ColKre1} and \cite{ColKre} for mapping and jumping properties in the classical setting.

Similar to the proof of Proposition~\ref{lem:s12}, instead of proving the uniform well-posedness of the Maxwell equations \eqref{eq:bdomain}, we focus on deriving the desired estimate \eqref{eq:s21}. However, it should be pointed out that the unique existence of strong solutions to \eqref{eq:bdomain} for sufficiently small $\tau$ can be directly seen from our subsequent argument.

By the Stratton-Chu formula, we have (see \cite{ColKre}, Theorem 6.2)
\begin{equation}\label{eq:int representation n1}
\begin{split}
\widetilde{V}_{\tau}(x)&= -\nabla_x\wedge\int_{\partial \Omega} \nu(y)\wedge \widetilde{V}_{\tau}(y) G(x,y)\ ds_y\\
&+\nabla_x\wedge\int_{\partial D_\tau}\nu(y)\wedge\widetilde{V}_\tau(y)G(x,y)\ ds_y\\
&-\frac{1}{i\omega}\nabla_x\wedge\nabla_x\wedge\int_{\partial \Omega} \nu(y)\wedge U_{\tau}(y) G(x,y)\ ds_y,
\qquad x\in\Omega\backslash\overline{D}_\tau.
\end{split}
\end{equation}
Set
\begin{align*}
\mathbf{a}_1(x)=&\nu(x)\wedge \widetilde{V}_{\tau}(x),\quad x\in\partial \Omega,\\
\mathbf{a}_2(x)=&\nu(x)\wedge \widetilde{V}_{\tau}(x),\quad x\in\partial D_\tau,
\end{align*}
and
\begin{align*}
P_1(x)=& -\frac{1}{i\omega}\nu(x)\wedge\nabla_x\wedge\nabla_x\wedge\int_{\partial \Omega} \nu(y)\wedge U_{\tau}(y) G(x,y)\ ds_y,
\qquad x\in\partial\Omega,\\
P_2(x)=& -\frac{1}{i\omega}\nu(x)\wedge\nabla_x\wedge\nabla_x\wedge\int_{\partial \Omega} \nu(y)\wedge U_{\tau}(y) G(x,y)\ ds_y,
\qquad x\in\partial D_\tau
\end{align*}
By letting $x$ approach $\partial D_\tau^+$, and using the mapping properties of $\mathbf{M}_{\partial D_\tau}$ and $\mathbf{M}_{\partial\Omega}$, one has
\begin{equation}\label{eq:integral system}
\begin{split}
(I+\mathbf{M}_{\partial\Omega})\mathbf{a}_1(x)-(\mathbf{M}_{\partial D_\tau}\mathbf{a}_2)(x)=& 2P_1(x),\quad x\in\partial\Omega,\\
(I-\mathbf{M}_{\partial D_\tau})\mathbf{a}_2(x)+(\mathbf{M}_{\partial\Omega}\mathbf{a}_1)(x)=& 2 P_2(x),\quad x\in\partial D_\tau .
\end{split}
\end{equation}
By using a similar scaling argument to that in the proof of Proposition~\ref{lem:s12}, one can show
\begin{equation}\label{eq:scaling}
(\mathbf{M}_{\partial D_\tau}\mathbf{a}_2)(\tau x')=(\mathbf{M}_{\partial D}^0\widetilde{\mathbf{a}}_2)(x')+(\mathcal{R}\widetilde{\mathbf{a}}_2)(x'),\quad x'\in\partial D,
\end{equation}
where $\widetilde{\mathbf{a}}_2(x'):=\mathbf{a}_2(\tau x')$ for $x'\in\partial D$, and
\begin{equation}\label{eq:scale 2}
\|\mathcal{R}\widetilde{\mathbf{a}}_2\|_{T_d^{0,\alpha}(\partial D)}\leq C\tau^2 \|\widetilde{\mathbf{a}}_2\|_{T_d^{0,\alpha}(\partial D)}.
\end{equation}
By setting $\widetilde{P}_2(x')=P_2(\tau x')$ for $x'\in\partial D$, the system of integral equations \eqref{eq:integral system} can be further formulated as
\begin{equation}\label{eq:integral system 2}
\begin{split}
(I+\mathbf{M}_{\partial\Omega})\mathbf{a}_1(x)-(\widetilde{\mathbf{M}}_{\partial D}\widetilde{\mathbf{a}}_2)(x)=& 2 P_1(x),\quad x\in\partial \Omega,\\
(I-M_{\partial D}^0)\widetilde{\mathbf{a}}_2(x')-(\mathcal{R}\widetilde{\mathbf{a}}_2)(x')+(\mathbf{M}_{\partial\Omega}\mathbf{a}_1)(\tau x')=& 2 \widetilde{P}_2(x'),\quad x'\in\partial D,
\end{split}
\end{equation}
where
\[
(\widetilde{\mathbf{M}}_{\partial D}\widetilde{\mathbf{a}}_2)(x)=(\mathbf{M}_{\partial D_\tau}\mathbf{a}_2)(x),\quad x\in\partial\Omega,
\]
obtained by replacing the variable $y$ in defining $\mathbf{M}_{\partial D_\tau}\mathbf{a}_2$ (cf. \eqref{eq:linear operator}) by $\tau y'$ with $y'\in\partial D$. Since the integral kernel for $(\widetilde{\mathbf{M}}_{\partial D}\widetilde{\mathbf{a}}_2)(x)$ is smooth when $x\in\partial\Omega$, it is straightforwardly shown that
\begin{equation}\label{eq:oper asymp 1}
\|\widetilde{\mathbf{M}}_{\partial D}\widetilde{\mathbf{a}}_2\|_{T_d^{0,\alpha}(\partial \Omega)}\leq C \tau^2\|\widetilde{\mathbf{a}}_2\|_{T_d^{0,\alpha}(\partial D)}.
\end{equation}
Similarly, one can see that
\begin{equation}\label{eq:oper asymp 2}
\|\mathbf{M}_{\partial\Omega}\mathbf{a}_1(\tau\ \cdot)\|_{T_{d}^{0,\alpha}(\partial D)}\leq C \| \mathbf{a}_1\|_{T_d^{0,\alpha}(\partial\Omega)}.
\end{equation}

Define
\begin{equation}\label{eq:syst operators 1}
\mathbf{a}=\left( \begin{array}{c}
\mathbf{a}_1\\
\widetilde{\mathbf{a}}_2
\end{array} \right ),\quad \mathbf{P}=\left( \begin{array}{c}
2P_1\\
2\widetilde{P}_2
\end{array} \right ),
\end{equation}
and
\begin{equation}\label{eq:syst operator 2}
\mathbf{L}=\left(\begin{array}{cc}
\mathbf{L}_{11} & \mathbf{0}\\
\mathbf{L}_{21} & \mathbf{L}_{22}
\end{array} \right ),\quad \mathbf{R}=\left(\begin{array}{cc}
\mathbf{0} & \mathbf{R}_{12}\\
\mathbf{0} & \mathbf{R}_{22}
\end{array} \right ),
\end{equation}
where
\[
\mathbf{L}_{11}:=I+\mathbf{M}_{\partial\Omega},\quad \mathbf{L}_{21}:=\mathbf{M}_{\partial\Omega},\quad \mathbf{L}_{22}:=I-M_{\partial D}^0
\]
and
\[
\mathbf{R}_{12}:=\widetilde{\mathbf{M}},\quad \mathbf{R}_{22}:=\mathcal{R}.
\]
Then the system of integral equations \eqref{eq:integral system 2} can be written as
\begin{equation}\label{eq:sysint}
(\mathbf{L}-\mathbf{R})\mathbf{a}=\mathbf{P}.
\end{equation}
$\mathbf{L}_{11}$ is compact from $T_d^{0,\alpha}(\partial \Omega)$ to $T_d^{0,\alpha}(\partial D)$ and $\mathbf{L}_{22}$ is compact from $T_d^{0,\alpha}(\partial D)$ to $T_d^{0,\alpha}(\partial D)$ (cf. \cite{ColKre}, Chapter 6). Since $\omega$ is not an EM eigenvalue to \eqref{eq:Maxwell free}, by the Riesz-Fredholm theory, $\mathbf{L}_{11}$ is invertible (cf. \cite{ColKre1}, Theorem 4.23), and $\mathbf{L}_{22}$ is also invertible (cf. \cite{ColKre1}, Theorem 5.4). Hence, it is straightforward to verify that $\mathbf{L}$ is invertible from $T_d^{0,\alpha}(\partial\Omega)\wedge T_d^{0,\alpha}(\partial D)$ to itself, and
\[
\mathbf{L}^{-1}=\left(\begin{array}{cc}
\mathbf{L}_{11}^{-1} & \mathbf{0}\\
-\mathbf{L}_{22}^{-1}\mathbf{L}_{21}\mathbf{L}_{11}^{-1} & \mathbf{L}_{22}^{-1}
\end{array} \right ).
\]
Hence, by noting \eqref{eq:scale 2} and \eqref{eq:oper asymp 1}, we see that for sufficiently small $\tau$,
\begin{equation}\label{eq:final 1}
\mathbf{a}=(\mathbf{L}-\mathbf{R})^{-1}\mathbf{P}.
\end{equation}
Finally, by using the mapping property of the electric dipole operator (see Theorem 6.17 in \cite{ColKre}), one can show
\[
\|P_1\|_{T_d^{0,\alpha}(\partial \Omega)}\leq C \|U_\tau\|_{(C^2(\partial\Omega))^3},
\]
which together with Proposition~\ref{lem:s12} further implies that
\begin{equation}\label{eq:final 2}
\|P_1\|_{T_d^{0,\alpha}(\partial\Omega)}\leq  C\left( \tau^3\|\psi\|_{TH^{-1/2}_{\text{\em Div}}(\partial\Omega)} + \tau^2\|\varphi(\tau\ \cdot)\|_{TH^{-1/2}(\partial D)} \right ).
\end{equation}
Moreover, it is straightforward to verify that
\[
\|\widetilde{P}_2\|_{T_d^{0,\alpha}(\partial D)}\leq C \|U_\tau\|_{(C^2(\partial\Omega))^3}
\]
and hence
\begin{equation}\label{eq:final 3}
\|\widetilde{P}_2\|_{T_d^{0,\alpha}(\partial D)}\leq C\left( \tau^3\|\psi\|_{TH^{-1/2}_{\text{\em Div}}(\partial\Omega)} + \tau^2\|\varphi(\tau\ \cdot)\|_{TH^{-1/2}(\partial D)} \right ).
\end{equation}
By combining \eqref{eq:final 1}, \eqref{eq:final 2} and \eqref{eq:final 3}, one readily has
\[
\|\mathbf{a}_1\|_{T_d^{0,\alpha}(\partial\Omega)}\leq C\left( \tau^3\|\psi\|_{TH^{-1/2}_{\text{\em Div}}(\partial\Omega)} + \tau^2\|\varphi(\tau\ \cdot)\|_{TH^{-1/2}(\partial D)} \right ),
\]
which immediately implies \eqref{eq:s21}.

The proof of Lemma~\ref{lem:s2} is complete.

\end{proof}

\section*{Acknowledgement}
The research of GB was supported
in part by the NSF grants DMS-0908325, DMS-0968360,
DMS-1211292,  the ONR grant N00014-12-1-0319, a Key
Project of the Major Research Plan of NSFC (No. 91130004), and a
special research grant from Zhejiang University. The research of HL is supported by NSF grant DMS-1207784.

\end{document}